\numberwithin{equation}{section}
\newtheorem{lemma}[equation]{Lemma}
\newtheorem{proposition}[equation]{Proposition}
\newtheorem{corollary}[equation]{Corollary}
\newtheorem{theorem}[equation]{Theorem}
\newtheorem*{maintheorem}{Main Theorem}
\theoremstyle{definition}
\newtheorem{definition}[equation]{Definition}
\newtheorem{notation}[equation]{Notation}
\newtheorem{remark}[equation]{Remark}
\newtheorem{assumption}[equation]{Assumption}
\newcommand{\1}{_{-1}}
\newcommand{\2}{_{-2}}
\newcommand{\vacuum}{\mathds{1}}
\newcommand{\ZZ}{\mathbb{Z}}
\newcommand{\NN}{\mathbb{N}}
\newcommand{\D}{\mathcal D}
\newcommand{\gLie}{\mathfrak{g}}
\newcommand{\gAffLie}{\widehat{\gLie}}
\newcommand{\baseField}{k}
\newcommand{\symsq}{\operatorname{S^2}}
\newcommand{\indexset}{\mathbf I}
\newcommand{\twosym}{_{(2)}^{\mathrm{sym}}}
\newcommand{\crefiterateformula}{\cref{def:vertex_algebra}\cref{item:iterate_formula}}
\newcommand\restrict[1]{\raisebox{-.2ex}{$|$}_{#1}}
\newcommand{\dash}{\nobreakdash-\hspace*{0pt}}
\newcommand{\thetaAlt}{\theta} 
\newcommand{\bulletAlt}{\bullet} 
\newcommand{\Talt}{T} 
\newcommand{\tauAlt}{\tau} 
\newcommand{\Id}{\mathrm{Id}}
\DeclareMathOperator{\Char}{char}
\DeclareMathOperator{\End}{End}
\DeclareMathOperator{\im}{im}
\DeclareMathOperator{\ad}{ad}
\crefname{enumi}{}{}
\Crefname{enumi}{Item}{Items}
\crefname{equation}{}{}
\Crefname{equation}{Equation}{Equations}
\crefname{notation}{Notation}{Notations}
\crefname{maintheorem}{Theorem}{Theorems}
\newcommand{\lc}{\emph{loc.\@ cit.\@}}
\begin{document}

\title{Constructing Chayet--Garibaldi algebras from affine vertex algebras (including the $3876$-dimensional algebra for $E_8$)}

\author{Tom De Medts \and Louis Olyslager}

\maketitle

\begin{abstract}
	In 2021, Maurice Chayet and Skip Garibaldi provided an explicit construction of a commutative non-associative algebra on the second smallest representation of $E_8$ (of dimension $3875$) adjoined with a unit. In fact, they define such an algebra $A(\gLie)$ for each simple Lie algebra $\gLie$, in terms of explicit but ad-hoc formulas.
	
	We discovered that their algebras $A(\gLie)$ have a natural interpretation in terms of affine vertex algebras, and their ad-hoc formulas take an extremely simple form in this new interpretation.
	It is our hope that this point of view will lead to a better understanding of this interesting class of algebras.
\end{abstract}

\paragraph*{MSC2020.} 20G41, 17B45, 17B69, 20G05, 20G15

\section{Introduction}

The second smallest irreducible representation of algebraic groups of type~$E_8$, which is of dimension $3875$, has received some attention lately, because it admits a (unique) invariant commutative non-associative algebra product.
This fact itself is not surprising, but it was not until a few years ago that this algebra has been constructed explicitly.
This had been achieved almost simultaneously by the first author and Michiel Van Couwenberghe \cite{DMVC21} and by Maurice Chayet and Skip Garibaldi \cite{CG21}. In fact, in both constructions, the algebra is a $3876$-dimensional algebra obtained by adjoining a unit, and the construction relies on the symmetric square of the Lie algebra.
The fact that the resulting algebra has as its automorphism group precisely the original group of type $E_8$ (and nothing more) is a consequence of \cite{GG15}.

The construction of Chayet and Garibaldi from \cite{CG21} is quite general. In fact, it takes as input
\begin{itemize}[itemsep=0ex]
	\item \emph{any} Lie algebra $\gLie$ over any field $k$ with $\Char(k) \neq 2$,
	\item a non-degenerate invariant symmetric bilinear form $\kappa$ on $\gLie$,
\end{itemize}
and produces a commutative non-associative algebra that we will denote by $A(\gLie, \kappa)$.
Chayet and Garibaldi only considered the case where $\gLie$ is a simple Lie algebra, in which case $\kappa$ is necessarily a non-zero multiple of the Killing form $K$, and they only use the fixed scaling $\kappa = \frac{1}{2h^\vee} K$, where $h^\vee$ is the dual Coxeter number of $\gLie$.
Their algebra is simply denoted by $A(\gLie)$.

\bigskip

The explicit formulas in the construction of Chayet and Garibaldi are very much \emph{ad hoc}---see \cref{subsection:cg_algebras} below---and were discovered by ``trial and error''.
The construction in \cite{DMVC21} is slightly different and builds the algebra piece by piece, and therefore gives perhaps even less insight in the algebraic structure.

It was an unexpected and pleasant surprise to us that we recognized the formulas of \cite{CG21} in a quite unrelated area, namely in the theory of \emph{vertex (operator) algebras}.
More precisely, we will be able to construct the algebras $A(\gLie, \kappa)$ as substructures of a simple vertex algebra $L_{\gAffLie}(1,0)$ arising from%
\footnote{We point out that it is common to define vertex algebras $L_{\gAffLie}(\ell,0)$ ``of level $\ell$'' for any constant $\ell$; see, for example, \cite[\S 6.2]{LL04}.
	We have decided to incorporate this constant in the construction of $\gAffLie$ instead, because the resulting formulas are easier while we do not lose any generality.
	(Notice, in particular, that we do \emph{not} normalize the bilinear form as in \cite[Remark 6.2.15]{LL04}.) See also \cref{remark:level} below.}
the affine Lie algebra $\gAffLie$ corresponding to~$\gLie$.

Vertex algebras have been introduced by Richard Borcherds in 1986 (see~\cite{Bor86}) and have famously played a key role in the monstrous moonshine.
We follow Borcherds' original approach and we allow for an arbitrary base field $k$ with $\Char(k) \neq 2$.
For the concrete setup, we will essentially follow \cite{LL04}, but modified to allow for such arbitrary fields~$k$, as in~\cite{LM18}.


Usually, vertex algebras are defined using formal Laurent series, but we prefer to avoid this setup and work directly with the infinitely many bilinear ``multiplication maps'', for two reasons.
First, we will need to explicitly compute some of these multiplication maps anyway.
Second, this also helps to make our paper more accessible to people who are not at all familiar with the theory of vertex algebras.

%

\medskip

Our main result can be summarized as follows.
(We refer to the text for a detailed explanation of the notation used in this statement.)
\begin{maintheorem}\label{mainthm}
	Let $\gLie$ be a Lie algebra over a field $k$ with $\Char(k) \neq 2$ with $Z(\gLie) = 0$, equipped with a non-degenerate invariant symmetric bilinear form $\kappa$.
	Let~$V$ be the $\NN$\dash graded affine vertex algebra $V = V_{\gAffLie}(1,0)$ (which depends also on $\kappa$) and let $L$ be the quotient of $V$ by its unique maximal graded ideal, which is a simple graded vertex algebra.
	Let $L\twosym$ be the subspace of $L_{(2)}$ spanned by the elements $a_{-1}b + b_{-1}a$ for all $a,b \in L_{(1)}$.
	
	\smallskip
	
	Make $L\twosym$ into a commutative algebra by the ``Jordan product''
	\[ a \bullet b \coloneqq \tfrac{1}{2}(a_1b + b_1a) \]
	and define a bilinear form on $L\twosym$ by
	\[ (a, b) \mapsto a_3 b \]
	for all $a,b \in L\twosym$.
	Then $L\twosym$ is isomorphic to the Chayet--Garibaldi algebra $A(\gLie, \kappa)$ (as an algebra with bilinear form, up to scaling of the bilinear form).
\end{maintheorem}

It is our hope that this construction will provide new insight into the Chayet--Garibaldi algebras $A(\gLie, \kappa)$, including the notorious $(3875 + 1)$\dash dimensional algebra for~$E_8$.

\paragraph*{Outline of the paper}

In \cref{subsection:cg_algebras}, we review the construction of the class of algebras $A(\gLie)$ constructed by Chayet and Garibaldi, generalized to arbitrary Lie algebras equipped with a non-degenerate symmetric bilinear invariant form.

In \cref{subsection:vertex_algebras}, we give the required background on vertex algebras (in arbitrary characteristic $\neq 2$). We have, on purpose, restricted this to the bare minimum while keeping the paper self-contained.
In the next \cref{subsection:affine_vertex_algebras}, we then present the construction of affine vertex algebras (i.e., vertex algebras arising from affine Lie algebras), translated to our setup from \cref{subsection:vertex_algebras}. At the end of this section, we collect a number of useful identities.

\Cref{section:results}, finally, is the core of the paper. We combine the Chayet--Garibaldi algebras from \cref{subsection:cg_algebras} with the affine vertex algebras from \cref{subsection:affine_vertex_algebras}, culminating in our main result in \cref{theorem:main_result}.

    
\paragraph*{Acknowledgments}

We are grateful to Jari Desmet for many interesting discussions. In particular, the proof of \cref{lemma:kernels_are_equal} relies on an observation due to him, which was a crucial ingredient to prove our main result in full generality.
We thank an anonymous referee for their insightful comments. In particular, one of their pertinent questions gave rise to \cref{lemma:unit,remark:unital} dealing with the question whether the algebras $A(\gLie, \kappa)$ are unital and revealing a connection with vertex \emph{operator} algebras.

The second author is supported by the FWO PhD mandate 1105425N.

\section{The Chayet--Garibaldi algebras $A(\gLie, \kappa)$}\label{subsection:cg_algebras}

We start by summarizing the construction of the algebras $A(\gLie, \kappa)$ introduced by Chayet and Garibaldi in \cite{CG21}.
We give their construction in a more general setting than they do, by allowing for arbitrary Lie algebras equipped with a non-degenerate symmetric bilinear invariant form.
(See \cref{assumptions:lie_algebras} below for the precise setting that we use.)

\begin{definition}\label{def:Lie_bilinear_form}
	Let $k$ be a commutative field, let $\gLie$ be a Lie algebra over $k$ and let
	\[ \kappa \colon \gLie \times \gLie \to k \colon (a, b) \mapsto \langle a, b \rangle \]
	be a symmetric bilinear form on $\gLie$.
	\begin{enumerate}
		\item We call $\kappa$ \emph{(Lie or $\gLie$-) invariant} if
			$\langle [a,b] , c \rangle = \langle a, [b,c]\rangle$
			for all $a,b,c \in \gLie$.
		\item We call $\kappa$ \emph{non-degenerate} if the only $a\in \gLie$ for which $\langle a, \gLie \rangle = 0$ is $a = 0$.
	\end{enumerate}
\end{definition}
\begin{remark}
	For a finite-dimensional \emph{simple} Lie algebra, the \emph{Killing form} $K(a,b) \coloneqq \operatorname{Tr}(\ad_a\circ\ad_b)$ is always an invariant form. It is either identically zero or non-degenerate. The former may happen if $\Char k \not =0$ (see, e.g., \cite[p.\@~17]{S67} for a discussion). On the other hand, when $\Char k = 0$, the Killing form is always non-degenerate, by Cartan's semisimplicity criterion (see, e.g., \cite[Theorem 5.24]{H24}), and by \cite[Section 2, ``Global hypotheses'']{CG21}, this is also true when $\gLie$ is the Lie algebra of an absolutely simple linear algebraic group~$G$ with $\Char k \geq h+2$, where $h$ is the Coxeter number, a constant depending only on the type of~$G$.
	Notice, for instance, that for type $E_8$, it is known by \cite[Theorem~B]{GP09} that when $\Char(k) = 2$, $3$ or $5$, the Killing form is zero (and in fact, the trace form of any $\gLie$-representation is zero).
	(We thank an anonymous referee for pointing this out.)
\end{remark}


\begin{assumption}\label{assumptions:lie_algebras}
	We assume for the rest of this paper that
	\begin{enumerate}[itemsep=0ex]
		\item\label{assumptions:lie_algebras:characteristic} $\baseField$ is a field of characteristic different from $2$;
		\item $\gLie$ is a Lie algebra over $\baseField$ with trivial center $Z(\gLie) = 0$;
		\item\label{assumptions:lie_algebras:form} $\kappa = \langle\cdot,\cdot\rangle$ is a non-degenerate symmetric invariant bilinear form on $\gLie$.
	\end{enumerate}
\end{assumption}
The condition $Z(\gLie) = 0$ will only be used much later, in the proof of \cref{lemma:kernels_are_equal}.


\begin{definition}\label{definition:symmetric_map}
	Let $W$ be a vector space, let $M \colon W \to W$ be a linear map and let $f$ be a non-degenerate bilinear form on $W$.
	We call $M$ \emph{symmetric} (with respect to~$f$) if $f(Mv,w)=f(v,Mw)$ for all $v,w \in W$, and we call $M$ \emph{skew-symmetric} if $f(Mv,w)=-f(v,Mw)$ for all $v,w \in W$.
\end{definition}


\begin{definition}\label{definition:symmetric_square}
	The \emph{symmetric square} of $\gLie$ is
	\[\symsq \gLie \coloneqq \gLie\otimes \gLie / \langle a\otimes b - b\otimes a \mid a,b\in \gLie\rangle.\]
	We use the notation $ab$ to denote the element $a\otimes b$ of $\symsq \gLie$. 
\end{definition}
Notice that $(a+b)(a+b) = aa + bb + 2ab$ for all $a,b\in \gLie$, hence $\symsq \gLie$ is spanned by the ``symmetric'' elements $aa$, for $a\in \gLie$.
(Recall that $\Char(k) \neq 2$ by \cref{assumptions:lie_algebras}\ref{assumptions:lie_algebras:characteristic}).
We use this observation repeatedly in the following definition.

\begin{definition}\label{definition:Ag}
	\begin{enumerate}
		\item 
			We define a commutative product on $\symsq \gLie$ by setting
			\begin{equation}\label{formula:star}
				aa * bb \coloneqq  
				a\bigl[b,[b,a]\bigr]+b\bigl[a,[a,b]\bigr]+[a,b][a,b]
				+2\langle a,b\rangle ab
			\end{equation}
			for all $a,b\in\gLie$ and extending linearly.
		\item
			We define a map $S \colon \symsq\gLie \to \End(\gLie)$ by setting
			\begin{equation}\label{def:chga_S}
				S(aa) \coloneqq \bigl[a,[a,\cdot]\bigr] + 2\langle a,\cdot\rangle a
			\end{equation}
			for all $a \in \gLie$ and extending linearly.
		\item 
			We define a commutative product $\diamond$ on $\im S$ by setting
			\begin{equation*}
				S(aa)\diamond S(bb) \coloneqq S(aa * bb)
			\end{equation*}
			for all $a,b\in\gLie$ and extending linearly.

			By~\cite[Lemma 4.2]{CG21}, this product is well-defined%
			\footnote{The proof of~\cite[Lemma 4.2]{CG21} holds \emph{mutatis mutandis} in our setting. More precisely, it suffices to replace the symbols `$h^\vee$' by `$1$', `$K$' by `$2\langle\cdot,\cdot\rangle$', and `$\sum P(X_i^2)$' by `$\sum 2\langle X_i,\cdot \rangle X_i$'.}.
			We denote the resulting algebra $(\im S, \diamond)$ by $A(\gLie, \kappa)$.
	\end{enumerate}
\end{definition}
%
%

\begin{remark}\label{remark:coxeter_number}
	\Cref{definition:Ag} first appeared in \cite[(4.1)]{CG21} in the specific case that $\gLie$ is
the Lie algebra of an absolutely simple linear algebraic group $G$ over $\baseField$ with $\Char(\baseField) \geq h+2 > h^\vee>0$ or $\Char k = 0$, where $h$ is the Coxeter number and $h^\vee$ is the dual Coxeter number of the Lie algebra~$\gLie$. In particular, such $\gLie$ is a simple Lie algebra.
	Chayet and Garibaldi use the specific non-degenerate bilinear form
	\[ \kappa \coloneqq \frac{1}{2h^\vee} K , \]
	where $K$ is the Killing form on $\gLie$.
	Notice that compared to their setup, we have split up the definition in two steps by first defining a product $*$ on $\symsq\gLie$ and then transforming this product to $\im S$.
	The proof of \cite[Lemma 4.2]{CG21} then amounts to verifying that the kernel of $S$ is an ideal for the product~$*$.
\end{remark}
\begin{lemma}\label{le:CG ids}
	Let $a,b \in \gLie$ and $f \in \im S$. Then
	\begin{enumerate}
		\item\label{item:adbSaa} $[\ad_b, S(aa)] = 2 S(a [b,a])$;
		\item\label{item:f diamond Sbb} $f \diamond S(bb) = \tfrac{1}{2} [\ad_b, [\ad_b, f]] + S(f(b) b)$.
	\end{enumerate}
\end{lemma}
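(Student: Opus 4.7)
For part~\ref{item:adbSaa}, I would evaluate both sides on an arbitrary $x\in\gLie$. The left-hand side unfolds directly from $S(aa)(x)=[a,[a,x]]+2\langle a,x\rangle a$ to
\[ [\ad_b,S(aa)](x) = [b,[a,[a,x]]] - [a,[a,[b,x]]] + 2\langle a,x\rangle[b,a] - 2\langle a,[b,x]\rangle a. \]
To handle the right-hand side, I would first polarize \cref{def:chga_S} via $(a+c)(a+c)=aa+cc+2ac$ to obtain $2S(ac)(x)=[a,[c,x]]+[c,[a,x]]+2\langle a,x\rangle c+2\langle c,x\rangle a$, specialize to $c=[b,a]$, and then match the scalar terms using invariance ($\langle[b,a],x\rangle = \langle b,[a,x]\rangle = -\langle a,[b,x]\rangle$). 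What remains is the purely Lie-theoretic identity
\[ [b,[a,[a,x]]] - [a,[a,[b,x]]] = [[b,a],[a,x]] + [a,[[b,a],x]], \]
which follows from two applications of the Jacobi identity (once to $[b,[a,[a,x]]]$ and once to the inner bracket that appears).

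For part~\ref{item:f diamond Sbb}, since everything in sight is linear in $f$ and $f\in\im S$, it suffices to prove the identity for $f=S(aa)$. By definition of $\diamond$, the left-hand side equals $S(aa*bb)$, which via \cref{formula:star} becomes
\[ S\bigl(a[b,[b,a]]\bigr) + S\bigl(b[a,[a,b]]\bigr) + S\bigl([a,b][a,b]\bigr) + 2\langle a,b\rangle S(ab). \]
For the right-hand side, I would first polarize part~\ref{item:adbSaa} to obtain $[\ad_b,S(ac)] = S([b,a]c)+S(a[b,c])$, and then iterate: applying~\ref{item:adbSaa} once gives $[\ad_b,S(aa)]=2S(a[b,a])$, and the polarized form applied again yields
\[ \tfrac{1}{2}[\ad_b,[\ad_b,S(aa)]] = S\bigl(a[b,[b,a]]\bigr) + S\bigl([b,a][b,a]\bigr). \]
Finally, since $S(aa)(b)=[a,[a,b]]+2\langle a,b\rangle a$, the term $S(S(aa)(b)\cdot b)$ expands to $S(b[a,[a,b]])+2\langle a,b\rangle S(ab)$. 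Adding these, the right-hand side matches the left-hand side term by term once we note that $[a,b][a,b]=[b,a][b,a]$ in $\symsq\gLie$.

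The only real obstacle is bookkeeping: keeping track of signs, using invariance of $\kappa$ in the correct direction, and double-checking the polarization of $S$. No deeper structural input beyond the Jacobi identity and \cref{assumptions:lie_algebras}\ref{assumptions:lie_algebras:form} (non-degeneracy is not even needed here, only invariance and symmetry of $\kappa$) is required.
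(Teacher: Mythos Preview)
Your proposal is correct and follows essentially the same approach as the paper's proof: for \cref{item:adbSaa} the paper cites the $\gLie$-equivariance of $S$ from \cite{CG21} and remarks that it can alternatively be verified directly from \eqref{def:chga_S}, which is precisely what you do; for \cref{item:f diamond Sbb} the paper reduces to $f=S(aa)$, applies \cref{item:adbSaa} twice to compute $\tfrac{1}{2}[\ad_b,[\ad_b,S(aa)]]$, and then rewrites $S((\ad_a^2 b)\,b)+2\langle a,b\rangle S(ab)$ as $S(S(aa)(b)\,b)$, exactly as in your argument. The only difference is that you make the intermediate computations explicit where the paper defers to \cite{CG21}.
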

\begin{proof}
	\begin{enumerate}
		\item This is the $\gLie$-equivariance of $S$; see \cite[(4.4)]{CG21}. Alternatively, this can be computed directly from the defining equation \eqref{def:chga_S}.
		\item Since $\im S$ is spanned by elements of the form $S(aa)$ with $a \in \gLie$, it suffices to verify the claim for $f = S(aa)$. Just as in the proof of \cite[Lemma 4.5]{CG21}, it follows from \cref{item:adbSaa} and \cref{definition:Ag} that
	\[
		S(aa) \diamond S(bb) = \tfrac{1}{2} \bigl[\ad_b, [\ad_b, S(aa)]\bigr] + S\bigl((\ad_{a}^2 b)b\bigr) + 2\langle  a, b \rangle S(ab).
	\]
	By \eqref{def:chga_S}, we have $S(aa)b = \ad_{a}^2 b + 2 \langle a, b \rangle a$, so we get
	\begin{equation*}
		S(aa) \diamond S(bb) = \tfrac{1}{2} \bigl[\ad_b, [\ad_b, S(aa) ]\bigr] + S\bigl((S(aa)b)\,b\bigr).
		\qedhere
	\end{equation*}
	\end{enumerate}
\end{proof}
The algebras from \cite{CG21} are always unital; see \cite[Lemma 4.5]{CG21}. In our more general setting, we have the following result.
(We thank an anonymous referee for asking this question. See also \cref{remark:unital} below.)
\begin{lemma}\label{lemma:unit}
	The algebra $A(\gLie,\kappa)$ is unital if and only if $\Id_\gLie \in \im S$, and then $\Id_\gLie$ is the unit element.
\end{lemma}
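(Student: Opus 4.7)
The plan is to treat the two implications separately. For the ``if'' direction, if $\Id_\gLie \in \im S$, then I would specialize \cref{le:CG ids}\cref{item:f diamond Sbb} to $f = \Id_\gLie$. Both terms on the right simplify: $[\ad_b,[\ad_b,\Id_\gLie]] = 0$ and $S(\Id_\gLie(b)\,b) = S(bb)$, so $\Id_\gLie \diamond S(bb) = S(bb)$ for every $b \in \gLie$. Since $\im S$ is spanned by the elements $S(bb)$ and $\diamond$ is commutative, $\Id_\gLie$ is then a two-sided unit.

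The converse is the substantive part. Assume $u \in \im S$ is a unit and set $v \coloneqq u - \Id_\gLie \in \End(\gLie)$; the goal is $v = 0$. Using the unit property $u \diamond S(aa) = S(aa)$ together with \cref{le:CG ids}\cref{item:f diamond Sbb} (and $[\ad_a,[\ad_a,\Id_\gLie]] = 0$), one rearranges to the identity
\[
    [\ad_a,[\ad_a,v]] = -2\,S(v(a)\,a)
    \qquad \text{in } \End(\gLie), \text{ for every } a \in \gLie.
\]
The decisive step is to evaluate both sides at $x = a$. The double commutator collapses to $[a,[a,v(a)]]$ because $[a,a] = 0$; expanding $S(v(a)\,a)$ via the polarization of \eqref{def:chga_S} and plugging in $x = a$ shows that the two Lie-bracket contributions cancel this exactly. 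What remains is the purely linear-algebraic identity
\[
    \langle v(a), a\rangle\,a + \langle a, a\rangle\,v(a) = 0
    \qquad \text{for all } a \in \gLie.
\]
Pairing this with $a$ and using $\Char(\baseField) \neq 2$ yields $\langle v(a), a\rangle = 0$ whenever $a$ is anisotropic, and then the identity itself forces $v(a) = 0$ for every such $a$.

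The last step is to upgrade this to $v \equiv 0$ by observing that anisotropic vectors span $\gLie$. For this I would pick any anisotropic $c \in \gLie$ (which exists because $\kappa$ is non-degenerate and $\Char(\baseField) \neq 2$: otherwise polarization would force $\kappa \equiv 0$) and note that for any $b$ with $\langle b, b\rangle = 0$, the polynomial $t^2 \langle c, c\rangle + 2t\langle b, c\rangle \in \baseField[t]$ has at most two roots, so $|\baseField| \geq 3$ provides a $t \in \baseField^\times$ making $b + tc$ anisotropic; then $b = (b + tc) - tc$ is a difference of two anisotropic vectors. Hence $v = 0$ and $u = \Id_\gLie \in \im S$. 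The main obstacle is the converse direction; the essential trick is the specialization $x = a$, which collapses the nested Lie brackets and converts the operator identity into a pointwise scalar equation solvable by elementary linear algebra.
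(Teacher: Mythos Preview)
Your proof is correct and follows the paper's argument almost verbatim: the ``if'' direction via \cref{le:CG ids}\cref{item:f diamond Sbb}, and for the converse, setting $v = u - \Id_\gLie$, applying \cref{le:CG ids}\cref{item:f diamond Sbb}, and evaluating the resulting operator identity at the point $a$ to obtain
\[
    \langle v(a), a\rangle\,a + \langle a, a\rangle\,v(a) = 0.
\]
The only difference is in how you finish from this scalar identity. The paper chooses an orthogonal basis $\mathcal{B}$ for $(\gLie,\kappa)$, notes that each $b \in \mathcal{B}$ is anisotropic, deduces $v(b) = \lambda_b b$, and then substitutes back to get $\lambda_b = 0$. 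You instead show directly that $v(a) = 0$ for every anisotropic $a$, and then argue that anisotropic vectors span $\gLie$. Both are valid; your argument is slightly more self-contained in that it does not invoke the existence of an orthogonal basis (which, since \cref{assumptions:lie_algebras} does not assume $\gLie$ finite-dimensional, is not entirely automatic), whereas the paper's version is marginally quicker once such a basis is granted.
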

\begin{proof}
	Suppose first that $\Id_\gLie \in \im S$. Then it follows immediately from \cref{le:CG ids}\cref{item:f diamond Sbb} that $\Id_\gLie \diamond S(bb) = S(bb)$ for all $b \in \gLie$, so indeed $A(\gLie, \kappa)$ is unital, with unit element $\Id_\gLie$.
	
	Conversely, suppose that $A(\gLie,\kappa)$ is unital and let $f \in \im S$ be the unit element of $A(\gLie,\kappa)$.
	Write $g \coloneqq f - \Id_\gLie$ (as operator on $\gLie$).
	By \cref{le:CG ids}\cref{item:f diamond Sbb} again, we have
	\[ S(bb) = f \diamond S(bb) = \tfrac{1}{2} [\ad_b, [\ad_b, f]] + S(f(b) b) \]
	for all $b \in \gLie$, which we rewrite as
	\[ S(g(b)b) + \tfrac{1}{2} [\ad_b, [\ad_b, g]] = 0 . \]
	Applying this to the element $b$ gives
	\[ S(g(b)b) b + \tfrac{1}{2} [b, [b, g(b)]] = 0 . \]
	On the other hand, we have $S(g(b)b) b = \tfrac{1}{2}[b, [g(b), b]] + \langle g(b), b \rangle b + \langle b, b \rangle g(b)$ by \eqref{def:chga_S}, so we get
	\begin{equation}\label{eq:gb b}
		\langle g(b), b \rangle b + \langle b, b \rangle g(b) = 0
	\end{equation}
	for all $b \in \gLie$.
	
	As $\kappa$ is symmetric and non-degenerate, we can find an orthogonal basis $\mathcal{B}$ for $\gLie$.
	For any $b \in \mathcal{B}$, we then have $\langle b,b \rangle \neq 0$ and hence, by~\eqref{eq:gb b}, $g(b) = \lambda_b b$ for some $\lambda_b \in \baseField$. However, \eqref{eq:gb b} then implies $2\lambda_b \langle b, b \rangle b = 0$ and thus $\lambda_b = 0$ for all $b \in \mathcal{B}$. It follows that $g(b) = 0$ for all $b \in \mathcal{B}$, so we conclude that $g=0$ and hence $f = \Id_\gLie$; in particular, $\Id_\gLie \in \im S$.
\end{proof}
\begin{remark}
	Let $\gLie$ be finite-dimensional and let $\mathcal{B} = \{ u_1,\dots,u_d \}$ be an orthonormal basis of $\gLie$ for the form $\kappa$.
	Assume that the Casimir element $\Omega := \sum_{i=1}^d u_iu_i \in S^2\gLie$ (with respect to $\kappa$) acts on $\gLie$ as a scalar $2h$, i.e.,
	\[ \sum_{i=1}^d \bigl[u_i[u_i,a]\bigr] = 2h a \]
	for all $a\in \gLie$. If $h \neq -1$, then $S(\Omega)$ is a non-zero multiple of $\Id_\gLie$.
	(Indeed, it follows from~\eqref{def:chga_S} with $a = u_i$ and summing over all $i$ that $S(\Omega) = 2(h+1)\Id_\gLie$.)
	
	Notice that this is exactly the situation described in \cite[p.\@~209]{LL04}. Following the two examples from \cite[Remark 6.2.15]{LL04}, we see that the algebra $A(\gLie, \kappa)$ is unital in the following two situations:
	\begin{enumerate}[itemsep=0ex]
		\item If $\gLie$ is an abelian Lie algebra, then $\Omega$ acts on $\gLie$ as $0$ and hence $A(\gLie,\kappa)$ is unital.
		\item If $\gLie$ is simple, then $\Omega$ acts on $\gLie$ as a scalar $2h$; if $h \neq -1$, then again $A(\gLie,\kappa)$ is unital.
	\end{enumerate}
\end{remark}

\medskip


The algebra $A(\gLie, \kappa)$ comes equipped with a ``natural'' bilinear form.
\begin{definition}\label{definition:tau}
	We define a bilinear form $\tau$ on the algebra $A(\gLie, \kappa)$ by setting
	\begin{equation*}
		\tauAlt(S(aa),S(bb)) = \tfrac12\langle S(aa)b,b\rangle
	\end{equation*}
	for all $a,b \in \gLie$ and extending linearly.
	(It is not hard to verify that this is well-defined.)
	This is a scaled version of the form defined in~\cite[Example 5.1]{CG21}.
	By \cite[Lemma~6.1]{CG21}, this form is associative, in the sense that
	\begin{equation*}
		\tauAlt(a\diamond b,c) = \tauAlt(a,b\diamond c)
	\end{equation*}
	for all $a,b,c\in A(\gLie,\kappa)$.
\end{definition}

\begin{remark}
	In \cite[Definition 1.1]{DMVC21}, we have called an algebra equipped with a non-degenerate associative bilinear form a \emph{Frobenius algebra}. More precisely, the algebra $A(\gLie, \kappa)$ is a \emph{Frobenius algebra for $\gLie$} in the sense that both the product and the bilinear form are $\gLie$-invariant.
	In fact, our main theorem on page~\pageref{mainthm} can be restated as an isomorphism between two Frobenius algebras. Compared to the original definition from \cite{CG21}, this isomorphism only holds up to scaling of the bilinear form, which is why we have added this phrase in our Main Theorem.
\end{remark}
\section{Vertex algebras}\label{subsection:vertex_algebras}

Vertex algebras have been introduced by Richard Borcherds in \cite{Bor86}.
We will only need vertex algebras arising from affine Lie algebras, which is a well understood class of vertex algebras;
we will recall their construction in \cref{subsection:affine_vertex_algebras}.
We emphasize that unlike \cite{LL04}, we allow a base field of arbitrary characteristic, following the approach from \cite{LM18}.
Our setup is perhaps somewhat atypical, avoiding the formal Laurent series framework, to make everything as accessible as possible to non-experts. See also \cref{rem:VAdef} below.



As in \cite[Section 2]{LM18}, we will use the obvious interpretation of binomial coefficients in arbitrary fields, i.e., for all $m \in \ZZ$ and $n \in \NN$, we first define
	\[ \binom m n \coloneqq \frac{m(m-1)\dotsm (m+1-n)}{n!} \in \ZZ \]
	and view it as an element of $k$ via the unique ring homomorphism $\ZZ \to k$.
	Notice, in particular, that $\binom{-1}{n} = (-1)^n$ for all $n \in \NN$.
\begin{definition}[{\cite[Section 4]{Bor86}; see also \cite[Definition 2.1]{LM18}}]\label{def:vertex_algebra}
	Let $V$ be a vector space and let $\vacuum \in V$ be a distinguished non-zero element called the \emph{vacuum vector}.
	For each integer $n\in\ZZ$, define a product map
	\[ V \times V \to V \colon (a,b) \mapsto a_n b . \]
	We call this the \emph{$n$-th product}. We use the notation
	\[ a_n \colon V \to V \colon b \mapsto a_n b \]
	for the left multiplication operators. Let $\mathcal{D}^{(m)}$ be a linear operator on $V$ for each $m\in \ZZ$. Then $V$ is a \emph{vertex algebra} if it satisfies the following axioms, for all $a,b,c\in V$ and all $n,m\in \ZZ$:
	\begin{enumerate}
		\item $a_nb=0$ for $n$ sufficiently large.\hfill  (\emph{truncation condition})\label{item:truncation_condition}  
		\item $\vacuum_{n} a = \delta_{-1,n}a$.\hfill  (\emph{vacuum property})\label{item:vacuum_property}
		\item $a_n\vacuum = \mathcal{D}^{(-n-1)}a$. \hfill (\emph{$\D$-derivate formula})\label{item:D_derivate_formula} 
		\item $a_nb=\sum_{i\geq 0}(-1)^{i+n+1}\mathcal D^{(i)}b_{n+i}a$. \hfill (\emph{skew symmetry})\label{item:skew_symmetry}
		\item $(a_mb)_nc = \sum_{i=0}^\infty (-1)^i\binom m i\left(a_{m-i}b_{n+i}-(-1)^mb_{n+m-i}a_i\right)c$. \hfill (\emph{iterate formula})\label{item:iterate_formula}
	\end{enumerate} 
\end{definition}
\begin{remark}\label{rem:VAdef}
	There are many equivalent definitions of a vertex algebra. For example, the iterate formula \ref{item:iterate_formula} and skew symmetry \ref{item:skew_symmetry} are equivalent, under the assumptions of the other axioms of a vertex algebra, to \emph{Borcherds' identity}:
	\begin{equation}\label{Borcherds identity}
				\sum_{i=0}^\infty\binom{m}{i}\left(a_{p+i}b\right)_{m+n-i}c = \sum_{i=0}^\infty(-1)^i\binom{p}{i}\left(a_{m+p-i}b_{n+i}-(-1)^pb_{n+p-i}a_{m+i}\right)c.
			\end{equation}
	Vertex algebras are usually introduced using formal Laurent series. (See, for example, \cite[Definition 3.1.1]{LL04} and \cite[Definition 2.1]{JLM19}.) This allows to combine the infinitely many products in one function $Y \colon V\to \End(V) \llbracket x,x^{-1} \rrbracket \colon a\mapsto \sum_{n\in\ZZ} a_nx^{-1-n}$, which therefore plays a key role in the theory.
	We have chosen to avoid this approach to make the setup more accessible to non-experts. In addition, this has the advantage that we can directly work with the products that we need in our main theorem.
	In this context, see also \cite[Proposition 3.6.6]{LL04}.
\end{remark}
\begin{notation}
We write $\D$ (without superscript) for the operator $\D^{(1)}$. 	
\end{notation}

We have the following easy consequence (see \cite[p. 90]{LL04} and \cite[Lemma 2.2]{LM18}):
\begin{lemma}Let $V$ be a vertex algebra. Then 
\begin{enumerate}
	\item $\D^{(n)} = 0$ for all $n<0$;
	\item $a_n\vacuum = 0$ for all $a\in V$ and all $n\geq 0$;
	\item $\D^{(0)} = \Id$.
\end{enumerate}
\end{lemma}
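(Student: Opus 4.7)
The plan is to prove the three items in the order (ii), (i), (iii). Item (ii) will follow directly from skew symmetry together with the vacuum property, item (i) will be an immediate consequence of (ii) and the $\D$\dash derivate formula, and item (iii) is the only genuinely delicate part, since it seems to require combining information from two different axioms.

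For (ii), the idea is to substitute $b = \vacuum$ into the skew symmetry axiom \cref{def:vertex_algebra}\cref{item:skew_symmetry}, obtaining
\[ a_n\vacuum = \sum_{i \geq 0}(-1)^{i+n+1}\D^{(i)}\vacuum_{n+i}a. \]
The vacuum property \cref{def:vertex_algebra}\cref{item:vacuum_property} forces $\vacuum_{n+i}a$ to vanish unless $n+i = -1$, and for $n \geq 0$ and $i \geq 0$ this is impossible; hence the whole right-hand side vanishes. For (i), the $\D$\dash derivate formula \cref{def:vertex_algebra}\cref{item:D_derivate_formula} reads $a_n\vacuum = \D^{(-n-1)}a$; letting $n$ range over $\NN$ and invoking (ii) immediately gives $\D^{(m)}a = 0$ for every $a \in V$ and every $m < 0$.

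For (iii), the strategy is to extract two complementary identities about $\D^{(0)}$ and then combine them. First, I would apply skew symmetry to $\vacuum_{-1}a = a$ (the left-hand side evaluated by the vacuum property); all summands with index $i \geq 1$ on the right involve $a_{i-1}\vacuum$ with $i-1 \geq 0$ and are killed by (ii), so only the $i=0$ term survives and the identity collapses to $a = \D^{(0)}(\D^{(0)}a)$, i.e.\ $(\D^{(0)})^2 = \Id$. Second, the iterate formula \cref{def:vertex_algebra}\cref{item:iterate_formula} with $b = \vacuum$ and $m = -1$ simplifies dramatically: the coefficients $(-1)^i\binom{-1}{i}$ collapse to $1$, and the vacuum property together with (ii) reduce the resulting double sum to a single surviving summand; evaluating the outcome at $n = -1$ and $c = \vacuum$ then delivers $(\D^{(0)}a)_{-1}\vacuum = a_{-1}\vacuum$, i.e.\ $(\D^{(0)})^2 = \D^{(0)}$. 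Combining the two identities yields $\D^{(0)} = \Id$.

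The main obstacle is precisely this last step: neither axiom on its own forces $\D^{(0)}$ to be the identity, and the proof hinges on recognizing that skew symmetry and the iterate formula yield two \emph{different} collapses (an involutive identity and an idempotent identity) that are jointly rigid enough to pin $\D^{(0)}$ down. The only other point that requires care is the bookkeeping inside the iterate formula, where one has to track which terms survive once the vacuum property and (ii) have been applied.
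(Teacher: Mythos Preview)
Your proof is correct and uses essentially the same approach as the paper. For (iii), the paper strings your two computations into a single chain $a = \vacuum_{-1}a = \D^{(0)}(a_{-1}\vacuum) = (a_{-1}\vacuum)_{-1}\vacuum = a_{-1}\vacuum = \D^{(0)}a$, which is exactly your skew-symmetry step followed by your iterate-formula step; you have simply packaged the same two ingredients as the operator identities $(\D^{(0)})^2 = \Id$ and $(\D^{(0)})^2 = \D^{(0)}$ rather than as a direct equality $a = \D^{(0)}a$.
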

\begin{proof}
	Let $a\in V$ and $n\in\ZZ$ with $n \geq 0$. Then
	\[
		\D^{(-n-1)}a = a_n\vacuum = \sum_{i\geq 0}(-1)^{i+n+1}\mathcal D^{(i)}\vacuum_{n+i}a = \sum_{i\geq 0}\mathcal D^{(i)}\delta_{n+i,-1}a = 0,
	\]
	and
	\begin{multline*}
	a = \vacuum\1a = \sum_{i\geq 0}(-1)^{i}\mathcal D^{(i)}a_{i-1}\vacuum = \D^{(0)}a_{-1}\vacuum 
	= (a\1\vacuum)\1\vacuum \\
	= \sum_{i=0}^\infty (-1)^i\tbinom{-1} i\left(a_{-1-i}\vacuum_{-1+i}+\vacuum_{-2-i}a_i\right)\vacuum 
	= a_{-1}\vacuum_{-1}\vacuum = a_{-1}\vacuum = \D^{(0)}a.\qedhere
	\end{multline*}
\end{proof}

We have the obvious notion of ideals in vertex algebras; see \cite[Definition 3.9.7 and Remark 3.9.8]{LL04}.
\begin{definition}
	Let $V$ be a vertex algebra. 
	\begin{enumerate}
		\item A subspace $I$ of $V$ is called a \emph{left ideal} if $a_nb \in I$ for all $a\in V$, $b\in I$ and $n\in\ZZ$. Similarly, it is called a \emph{right ideal} if $a_nb \in I$ for all $a \in I$, $b \in V$ and $n \in \ZZ$.
		\item A subspace $I$ that is both a left and right ideal is called an \emph{ideal}. An ideal $I$ is called \emph{proper} if $V \neq I$.
	\end{enumerate}
\end{definition}
The following observation (over an arbitrary base field $k$) can also be found in the last paragraph of \cite[p.~279]{JLM19}.
\begin{lemma}\label{lemma:ideals}
	Let $V$ be a vertex algebra. Then the following are equivalent:
	\begin{enumerate}[label=\rm (\alph*),itemsep=0ex]
		\item\label{lemma:ideals;item:right} $I$ is a right ideal;
		\item\label{lemma:ideals;item:left} $I$ is a left ideal and $\D^{(n)}I \subseteq I$ for all $n\in\ZZ_{\geq0}$;
		\item\label{lemma:ideals;item:ideal} $I$ is an ideal.
	\end{enumerate} 
\end{lemma}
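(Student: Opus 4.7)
The plan is to establish (a) $\Leftrightarrow$ (b), after which (c) is immediate: by definition (c) is the conjunction of being a left and a right ideal, and (a) $\Leftrightarrow$ (b) says each of these two conditions already implies the other (together with the auxiliary $\D$-closure). The main engine will be the skew symmetry axiom \cref{def:vertex_algebra}\cref{item:skew_symmetry}, which rewrites $a_n b$ as a sum (finite by the truncation condition \cref{def:vertex_algebra}\cref{item:truncation_condition}) of terms $\D^{(i)} b_{n+i} a$, thereby exchanging the roles of the two arguments at the cost of some $\D^{(i)}$'s. The $\D$-derivate formula \cref{def:vertex_algebra}\cref{item:D_derivate_formula} will, in turn, provide the link between the right-ideal property and $\D$-closure.

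For (a) $\Rightarrow$ (b), I would first observe that a right ideal is automatically $\D$-closed: for $a \in I$ and $m \geq 0$, the $\D$-derivate formula gives $\D^{(m)} a = a_{-m-1} \vacuum$, which lies in $I$ because $\vacuum \in V$ and $I$ is a right ideal. With $\D^{(i)} I \subseteq I$ in hand, the skew symmetry expansion
\[ a_n b = \sum_{i \geq 0} (-1)^{i+n+1} \D^{(i)} b_{n+i} a \]
for $a \in V$ and $b \in I$ shows $a_n b \in I$: each inner product $b_{n+i} a$ lies in $I$ by the right-ideal hypothesis applied to $b \in I$ (now playing the role of the \emph{first} argument), and the $\D^{(i)}$'s preserve $I$.

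For (b) $\Rightarrow$ (a), the same skew symmetry expansion works in reverse: for $a \in I$ and $b \in V$, each $b_{n+i} a$ lies in $I$ by the left-ideal property (this time with $a \in I$ as the \emph{second} argument), and the $\D^{(i)}$'s preserve $I$ by hypothesis; hence $a_n b \in I$. I do not foresee any substantial obstacle here: the content of the lemma is really just the observation that skew symmetry interchanges left and right multiplication modulo derivations, and that on the right-ideal side, $\D$-closure comes for free from the vacuum property.
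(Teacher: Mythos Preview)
Your proof is correct and follows essentially the same approach as the paper: the paper's proof likewise uses the $\D$-derivate formula to show that a right ideal is $\D$-closed, and then invokes skew symmetry to pass between left and right ideals once $\D$-closure is available. You have simply spelled out the skew-symmetry step in more detail than the paper, which compresses it to a single sentence.
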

\begin{proof}
	If $I$ is a right ideal, then $\D^{(n)}I \subseteq I$ for all $n\in\ZZ_{\geq0}$, by the $\D$-derivate formula (\cref{def:vertex_algebra}\ref{item:D_derivate_formula}). If $I$ is a left or right  ideal and closed under all $\D^{(n)}$, then it is a two-sided ideal, by skew symmetry (\cref{def:vertex_algebra}\ref{item:skew_symmetry}). The other implications are trivial.
\end{proof}
We will later need the following related construction of an ideal starting from a left ideal.
\begin{lemma}\label{lemma:J_ideal}
	Let $J$ be a left ideal in a vertex algebra $V$. Then 
	\[ \bar{J} \coloneqq \sum_{i=0}^\infty \D^{(i)}J\] is a two-sided ideal.
\end{lemma}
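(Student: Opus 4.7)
The plan is to apply \cref{lemma:ideals} by verifying two properties: $\bar J$ is a left ideal, and $\D^{(n)} \bar J \subseteq \bar J$ for every $n \geq 0$. Both reductions boil down to commutation identities for the operators $\D^{(n)}$, which I will extract from the iterate formula and skew symmetry.

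First I would establish the ``divided-power'' identity $\D^{(n)} \D^{(i)} = \binom{n+i}{n} \D^{(n+i)}$ for $n,i \geq 0$. By the $\D$\dash derivate formula, $\D^{(n)}\D^{(i)} a = (a_{-i-1}\vacuum)_{-n-1}\vacuum$, and I would expand this using \crefiterateformula. The vacuum property together with the observation that $a_k \vacuum = 0$ for $k \geq 0$ collapse the double sum to a single surviving term at the index $k=n$, yielding $(-1)^n \binom{-i-1}{n} \D^{(n+i)} a = \binom{n+i}{n} \D^{(n+i)} a$. This immediately gives $\D^{(n)} \bar J \subseteq \D^{(n+i)} J \subseteq \bar J$.

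Next I need $a_n (\D^{(i)} b) \in \bar J$ for all $a \in V$, $b \in J$, $n \in \ZZ$, $i \geq 0$. Rewriting $\D^{(i)} b = b_{-i-1}\vacuum$ and commuting $a_n$ past $b_{-i-1}$ via the standard commutator formula $[a_n, b_{-i-1}] = \sum_{j\geq 0}\binom{n}{j}(a_j b)_{n-i-1-j}$ (a direct consequence of Borcherds' identity~\eqref{Borcherds identity} with $p=0$), I obtain
\[ a_n(\D^{(i)} b) = b_{-i-1}(a_n\vacuum) + \sum_{j\geq 0}\binom{n}{j}(a_j b)_{n-i-1-j}\vacuum. \]
In the (finite, by truncation) sum, each $a_j b$ lies in $J$ because $J$ is a left ideal, and each factor $(a_j b)_m \vacuum$ is either $0$ (for $m \geq 0$) or equals $\D^{(-m-1)}(a_j b) \in \bar J$ (for $m \leq -1$), so the whole sum lands in $\bar J$.

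The main obstacle is the leftover term $b_{-i-1}(a_n \vacuum)$: here $b$ is positioned as a \emph{left} multiplier on something in $V$, so membership in $\bar J$ is not automatic. For $n \geq 0$ this term vanishes because $a_n \vacuum = 0$. For $n \leq -1$ we have $a_n \vacuum = \D^{(-n-1)} a \in V$, and I would resolve the issue by a second appeal to skew symmetry \cref{def:vertex_algebra}\cref{item:skew_symmetry} to flip $b_{-i-1}(\D^{(-n-1)}a)$ into $\sum_{k \geq 0}(-1)^{k-i}\D^{(k)}\bigl((\D^{(-n-1)}a)_{-i-1+k} b\bigr)$. Since $J$ is a left ideal, every element $(\D^{(-n-1)}a)_m b$ lies in $J$, so each summand is in $\D^{(k)} J \subseteq \bar J$, and the sum is finite by truncation. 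Combining both pieces completes the proof that $\bar J$ is a left ideal, and hence by \cref{lemma:ideals} a two-sided ideal.
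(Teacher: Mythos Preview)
Your argument is correct. The paper does not give its own proof here; it simply refers to the first paragraph of the proof of \cite[Lemma~2.7]{JLM19}, so there is nothing substantive to compare against. Your proof is a clean self-contained version of exactly the kind of computation that reference carries out: the divided-power identity $\D^{(n)}\D^{(i)} = \binom{n+i}{n}\D^{(n+i)}$ handles the $\D$-stability of $\bar J$, and the combination of the commutator formula (Borcherds' identity at $p=0$) with skew symmetry handles left-ideal closure. One tiny presentational slip: the line ``$\D^{(n)} \bar J \subseteq \D^{(n+i)} J \subseteq \bar J$'' should read $\D^{(n)}\D^{(i)}J \subseteq \D^{(n+i)}J \subseteq \bar J$ for each fixed $i$, then sum over $i$; the mathematics is fine.
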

\begin{proof}
	This is shown in the first paragraph of the proof of \cite[Lemma 2.7]{JLM19}.
	(Notice that the specific form of $J$ from the statement of that result is irrelevant for this part of the proof.)
%
\end{proof}

We have the obvious notions of simple vertex algebras, maximal ideals, ideals generated by a subset and quotients of a vertex algebra by an ideal.

The affine vertex algebras that we will introduce in \cref{subsection:affine_vertex_algebras} are \emph{graded} vertex algebras:
\begin{definition}
	Let $V$ be a vertex algebra.
	\begin{enumerate}
		\item \cite[Definition 2.6]{JLM19} Assume that $V = \bigoplus_{n\in\ZZ}V_{(n)}$ for certain subspaces $V_{(n)}$ with $\vacuum\in V_{(0)}$.
			Suppose that
			\begin{equation}\label{eq:graded}
				a_n b \in V_{(l+m-1-n)} \quad \text{ for all } a\in V_{(l)},\ b\in V_{(m)} \text{ and } l,m,n\in \ZZ .
			\end{equation}
			Then we call $V$ \emph{graded}. 
		\item An ideal $I$ of a graded vertex algebra $V$ is \emph{graded} if $I = \bigoplus_{i} V_{(i)}\cap I$.
		\item We say that a vertex algebra is \emph{simple graded} if $0$ and $V$ are the the only graded ideals.
		\item We say that a vertex algebra has \emph{type O}, if it is $\NN$-graded, i.e., $V = \bigoplus_{n\geq 0}V_{(n)}$, with $V_{(0)}$ one-dimensional.
		\item A vertex algebra is \emph{OZ (one-zero)} if it has type O and, in addition, $V_{(1)}=0$.
	\end{enumerate}
\end{definition}
\begin{remark}
	In the previous definition, the notions \emph{simple graded} and \emph{type O} are non-standard, but the notion \emph{OZ} is standard. When $V$ is a vertex \emph{operator} algebra (VOA, see \cite[Definition 3.1.22]{LL04} for a definition) and has type O, then it is called a vertex operator algebra \emph{of CFT type}; see \cite{DLMM98} for the original definition.
\end{remark}

The following observation is essentially the argument given in \cite[Remark 3.9.11]{LL04}.
\begin{lemma}\label{lemma:simple_vertex_algebra}
	A graded vertex algebra $V = \bigoplus_{n \geq 0} V_{(n)}$ of type O has a unique maximal graded ideal $M$.
	In particular, $V/M$ is a simple graded vertex algebra of type O.
\end{lemma}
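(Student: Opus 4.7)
The plan is to take $M$ to be the sum of all proper graded ideals of $V$ and verify it has all desired properties. The one non-trivial point is to check that this sum is itself proper, which rests on a simple observation about the zeroth graded piece.

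First I would observe that, because $V_{(0)} = k \vacuum$ is one-dimensional, a graded ideal $I$ is proper if and only if $I \cap V_{(0)} = 0$. Indeed, if $I \cap V_{(0)} \neq 0$, then $\vacuum \in I$, and then for any $a \in V$ we have $a = a_{-1} \vacuum \in I$ (by the $\D$-derivate formula and $\D^{(0)} = \Id$, or directly since $I$ is a left ideal), so $I = V$. The converse is trivial.

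Next, let $M \coloneqq \sum_I I$, the sum taken over all proper graded ideals $I$ of $V$. This is a sum of graded subspaces, hence graded; and it is a sum of (two-sided) ideals, hence a (two-sided) ideal. Since each summand $I$ satisfies $I \cap V_{(0)} = 0$ and the grading is respected, the zeroth graded piece of $M$ is $\sum_I (I \cap V_{(0)}) = 0$. By the observation above, $M$ is proper, and by construction it contains every proper graded ideal; hence $M$ is the unique maximal graded ideal.

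For the simplicity of the quotient, I would note that $M$ is graded, so the grading descends to $V/M = \bigoplus_{n \geq 0} V_{(n)}/(M \cap V_{(n)})$, and the identities in \cref{def:vertex_algebra} together with~\eqref{eq:graded} pass through the quotient, so $V/M$ is a graded vertex algebra. Moreover $(V/M)_{(0)} = V_{(0)}/0$ is one-dimensional, so $V/M$ has type O. Finally, any graded ideal of $V/M$ is the image of a graded ideal of $V$ containing $M$; by maximality of $M$ such an ideal is either $M$ itself or $V$, giving the zero ideal or all of $V/M$. Thus $V/M$ is simple graded. No serious obstacle is anticipated; the only point requiring care is the initial observation that propriety of a graded ideal is detected in degree zero.
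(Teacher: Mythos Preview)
Your proof is correct and follows essentially the same approach as the paper: define $M$ as the sum of all proper graded ideals and use the one-dimensionality of $V_{(0)}$ to see that no proper graded ideal contains $\vacuum$, so neither does $M$. You have simply spelled out in more detail the steps (why $\vacuum\in I$ forces $I=V$, and why the quotient is simple graded of type~O) that the paper leaves implicit.
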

\begin{proof}
	Since $V_{(0)} = k \vacuum$, any proper graded ideal is a subspace of $\bigoplus_{n>0}V_{(n)}$. Hence the sum of all proper graded ideals does not contain $\vacuum$ and is a maximal graded ideal.
\end{proof}
\begin{remark}
	When $k$ has characteristic zero and $V$ is a VOA, then by \cite[Remarks~3.9.10 and 3.9.11]{LL04},
	every ideal is graded. In particular, the quotient $V/M$ in \cref{lemma:simple_vertex_algebra} is then a simple vertex algebra.
	This is false, however, in characteristic $p > 0$. In fact, \cite{LM18} gives examples of affine vertex algebras in characteristic $p$ containing infinitely many maximal ideals.
\end{remark}

\section{Vertex algebras related to affine Lie algebras}\label{subsection:affine_vertex_algebras}

In this section, we construct a simple graded vertex algebra starting from the Lie algebra~$\gLie$ and the form $\kappa = \langle \cdot,\cdot\rangle$ satisfying \cref{assumptions:lie_algebras}. (In fact, not all of these assumptions are needed to construct the vertex algebra.) 


 We start by constructing an affine Lie algebra. Then we construct from its universal enveloping algebra a module for this affine Lie algebra. Next, we put a graded vertex algebra structure (of type O) on this module, by defining infinitely many products. The simple graded vertex algebra is then obtained by taking the quotient by its unique maximal graded ideal.
 
 Our construction essentially follows \cite[\S 3]{LM18}, except that the explicit basis in \cref{corollary:basis_for_V} is as in \cite[p.~206]{LL04}. Usually, a constant $\ell$ is involved as well in the construction of the vertex algebra, but we decide to incorporate this constant into the bilinear form $\kappa$, which simplifies the resulting formulas without losing any generality.
 
\begin{definition}
	The \emph{affine Lie algebra} corresponding to the pair $(\gLie, \kappa)$ is the Lie algebra with underlying vector space
	\[ \gAffLie \coloneqq \gLie\otimes \baseField[t,t^{-1}] \,\oplus\, \baseField\mathbf{k} \]
	and with Lie bracket given by
	\[ \left[a\otimes t^m,b\otimes t^n\right] \coloneqq \left[a,b\right]\otimes t^{m+n}+m\left\langle a,b\right\rangle \delta_{m+n,0}\mathbf{k} \]
	for all $a,b\in\gLie$ and $m,n\in\ZZ$, where $\mathbf{k}$ is a central element. We use the notation $\gAffLie_\kappa$ when we want to explicitly mention the bilinear form $\kappa = \langle \cdot, \cdot \rangle$.
	It is a common and convenient convention to write
	\[ a(n) \coloneqq a\otimes t^n \]
	for all $a\in \gLie$ and all $n\in \ZZ$. The defining equation for the Lie bracket can then be rewritten as
	\begin{equation}\label{eq:aff_lie_alg_eqn}
		\left[ a(m), b(n) \right] \coloneqq \left[a,b\right](m+n)+m\left\langle a,b\right\rangle \delta_{m+n,0}\mathbf{k} .
	\end{equation}
	As in \cite[(6.2.6)]{LL04}, we also define the Lie subalgebra
	\[ \gAffLie_{\leq 0}\coloneqq \gLie\otimes \baseField[t] \,\oplus\, \baseField\mathbf{k} = \left\langle a(n) \mid a \in \gLie, n \in \ZZ_{\geq 0} \right\rangle \oplus \langle \mathbf{k} \rangle . \]
	(Notice the potentially confusing notation $\gAffLie_{\leq 0}$, but see \cite[Remark 6.2.1]{LL04} for an explanation.)
\end{definition}
Next, we construct a $\gAffLie$-module.
\begin{definition}
	Let $U(\gAffLie)$ be the universal enveloping algebra of $\gAffLie$. 
	Let $\gAffLie_{\leq 0}$ act on $\baseField\vacuum$ (on the left) by setting
	\[ \mathbf{k}\cdot\vacuum \coloneqq \vacuum \quad \text{and} \quad a(n)\cdot \vacuum \coloneqq 0 \]
	for all $a\in \gLie$ and $n \in \ZZ_{\geq 0}$, and let $\gAffLie_{\leq 0}$ act on the right on $U(\gAffLie)$ by multiplication.
	Now define
	\[ V_{\gAffLie}(1,0) \coloneqq U(\gAffLie)\otimes_{\gAffLie_{\leq 0}} \baseField\vacuum . \]
	This is a (left) $\gAffLie$-module by multiplication (and hence it is also a left $U(\gAffLie)$-module).
\end{definition}
\begin{remark}\label{remark:level}
	This is the place where usually the constant $\ell\neq0$ comes into play, by defining
	$V_{\gAffLie}(\ell,0)$ as above, but with the equation
	\[ \mathbf{k}\cdot\vacuum \coloneqq \vacuum \quad\text{ replaced by }\quad  \mathbf{k}\cdot\vacuum \coloneqq \ell\vacuum. \]
	This constant is called the \emph{level} of the vertex algebra.
	It turns out (and is well known) that $\gAffLie_\kappa$ and $\gAffLie_{\ell\kappa}$ are isomorphic, with an explicit isomorphism given by $a(n)\mapsto a(n)$ and $\mathbf k \mapsto \ell \mathbf k$.
	Under this isomorphism, the resulting vertex algebras $V_{\gAffLie_\kappa}(\ell,0)$ and $V_{\gAffLie_{\ell\kappa}}(1,0)$ are isomorphic (as vertex algebras, cf.\@~\cref{proposition:V_is_VA}). The level only becomes relevant when $V_{\gAffLie_{\kappa}}(\ell,0)$ is studied in the context of vertex \emph{operator} algebras.
	Notice, in particular, that in \cite[Remark 6.2.15]{LL04}, the affine Lie algebra $\gAffLie$ is constructed using the scaling $\kappa\coloneqq\frac{1}{2h^\vee}K$, and the vertex operator algebra $V_{\gAffLie_{\kappa}}(1,0)$ obtained in \cite[Theorem~6.2.18]{LL04} then corresponds exactly to the setup of Chayet and Garibaldi; see \cref{remark:coxeter_number}.
 	See also \cref{remark:unital} below.
\end{remark}
In order to construct a basis for $V_{\gAffLie}(1,0)$, we use the Poincaré--Birkhoff--Witt theorem.
\begin{theorem}[PBW theorem]\label{theorem:pbw}
	Let $\mathfrak{h}$ be a Lie algebra over a field $\baseField$ with totally ordered basis $\mathcal{B}$.
	Then the universal enveloping algebra $U(\mathfrak h)$ has a basis 
	\[ \{a^{(1)}a^{(2)}\dotsm a^{(n)} \mid a^{(1)},\dots, a^{(n)}\in \mathcal{B} \text{ with } a^{(1)} \geq a^{(2)} \geq \dots \geq a^{(n)}\}.\]  
\end{theorem}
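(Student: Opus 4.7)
The plan is to establish the result in two halves: first, that the ordered monomials span $U(\mathfrak{h})$, and second, that they are linearly independent. Throughout, recall that $U(\mathfrak{h}) = T(\mathfrak{h})/I$, where $T(\mathfrak{h})$ is the tensor algebra and $I$ is the two-sided ideal generated by $a \otimes b - b \otimes a - [a,b]$ for $a, b \in \mathfrak{h}$.

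For the spanning statement, I would work in $T(\mathfrak{h})$, which has a basis consisting of arbitrary (not necessarily ordered) monomials $a^{(1)} \otimes \dots \otimes a^{(n)}$ with $a^{(i)} \in \mathcal{B}$. Define a \emph{disorder index} of such a monomial as the number of indices $i$ with $a^{(i)} < a^{(i+1)}$. Whenever the disorder is positive, pick such an $i$ and apply the relation $a^{(i)} \otimes a^{(i+1)} \equiv a^{(i+1)} \otimes a^{(i)} + [a^{(i)}, a^{(i+1)}] \pmod{I}$; the first resulting monomial has strictly smaller disorder and the second has strictly smaller length. A double induction on length and disorder then shows that every monomial in $T(\mathfrak{h})$ is congruent modulo $I$ to a linear combination of ordered monomials, so the ordered monomials span $U(\mathfrak{h})$.

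For linear independence, the standard route is to construct a faithful $U(\mathfrak{h})$-module in which the images of the ordered monomials are manifestly independent. Let $V$ be the free $\baseField$-vector space on the set of formal symbols $a^{(1)} \cdots a^{(n)}$ with $a^{(1)} \geq \dots \geq a^{(n)}$ in $\mathcal{B}$ (including the empty symbol). Define a linear action of each $a \in \mathcal{B}$ on $V$ recursively on the length: if $a \geq a^{(1)}$, set $a \cdot (a^{(1)} \cdots a^{(n)}) = a \, a^{(1)} \cdots a^{(n)}$ directly; otherwise, set it equal to $a^{(1)} \cdot (a \cdot (a^{(2)} \cdots a^{(n)})) + [a, a^{(1)}] \cdot (a^{(2)} \cdots a^{(n)})$, where the right-hand side is computed by recursion. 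One then extends this action linearly to all of $\mathfrak{h}$ and must verify that the commutator relation $a \cdot b \cdot v - b \cdot a \cdot v = [a,b] \cdot v$ holds for all $a, b \in \mathfrak{h}$ and $v \in V$; this promotes the action to a $U(\mathfrak{h})$-module structure. Once this is in place, applying ordered monomials in $U(\mathfrak{h})$ to the empty symbol $1 \in V$ yields exactly the corresponding basis vectors of $V$, forcing the ordered monomials in $U(\mathfrak{h})$ to be linearly independent.

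The main obstacle is precisely verifying that the proposed action respects the commutator relation. One checks this by induction on the length of $v$ and, within a fixed length, by reducing to the case where both $a$ and $b$ lie in $\mathcal{B}$ with $a < b$ and the leading factor $a^{(1)}$ of $v$ satisfies $a^{(1)} > a$ or $a^{(1)} > b$, where one has to commute several factors past each other; the essential identity that makes the critical ambiguous reductions agree is the Jacobi identity in $\mathfrak{h}$. (Equivalently, the argument can be packaged via Bergman's diamond lemma applied to the rewriting system $b \otimes a \to a \otimes b + [a,b]$ for $a < b$ in $\mathcal{B}$, in which case the only nontrivial overlap ambiguity, coming from triples $a < b < c$, is resolved precisely by the Jacobi identity.) Once this compatibility is in hand, the PBW basis is obtained from the combination of the spanning and independence statements.
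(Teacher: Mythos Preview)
Your outline is a correct sketch of the standard textbook proof of the Poincar\'e--Birkhoff--Witt theorem: the spanning argument via reordering (with induction on length and disorder) together with linear independence obtained by constructing an explicit $\mathfrak{h}$-module structure on the free vector space on ordered monomials, the key point being that the Jacobi identity resolves the overlap ambiguities. The diamond-lemma reformulation you mention is an equivalent way to package the same verification.

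However, the paper does not prove this statement at all. Theorem~\ref{theorem:pbw} is quoted without proof as a classical result and is used only as input for \cref{corollary:basis_for_V} and later in the proof of \cref{lemma:ideal_ker_T}. So there is nothing to compare against: you have supplied a proof where the paper simply cites the theorem.
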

\begin{corollary}\label{corollary:basis_for_V}
	Let $a^{(i)}, i\in \indexset$ be an ordered basis for $\gLie$ (where $\indexset$ is some ordered index set).
	Let ``$\hspace*{.5ex}>\hspace*{-.2ex}$'' be the lexicographical order on $\ZZ \times \indexset$, i.e., we set
	\[ (m,i) > (n,j) \iff m > n, \text{ or } m=n \text{ and } i>j, \]
	for all $m,n\in\ZZ$ and $i,j\in\indexset$.
	Then the elements of the form
	\begin{equation}\label{eq:basis_V_aff}
		a^{(i_1)}(-m_1)\dotsm a^{(i_r)}(-m_r)\vacuum\in V_{\gAffLie}(1,0)
	\end{equation}
	with $(m_1,i_1) \geq \dots \geq (m_r,i_r) \in \ZZ_{> 0}\times\indexset$, and $r\in \ZZ_{\geq0}$, form a basis for $V_{\gAffLie}(1,0)$.
\end{corollary}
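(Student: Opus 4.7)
The plan is to derive the claim directly from the PBW theorem (\cref{theorem:pbw}) applied to $\gAffLie$, once we split the basis into ``creation'' and ``annihilation'' parts and observe how the latter interact with $\vacuum$.

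First I would fix an ordered basis of $\gAffLie$ as follows: take all elements $a^{(i)}(-m)$ with $m \in \ZZ_{>0}$ and $i \in \indexset$, ordered by the lexicographic order on $(m,i)$, and declare each of them to be \emph{larger} than every basis element of the Lie subalgebra $\gAffLie_{\leq 0}$ (that is, $\mathbf{k}$ together with all $a^{(i)}(n)$ for $n\geq 0$, ordered in any convenient way). Applying \cref{theorem:pbw} to this totally ordered basis of $\gAffLie$ yields a vector space basis of $U(\gAffLie)$ in which every monomial factors as $N \cdot Z$, where
\[ N = a^{(i_1)}(-m_1)\dotsm a^{(i_r)}(-m_r) \quad \text{with } (m_1,i_1)\geq\dots\geq(m_r,i_r) \in \ZZ_{>0}\times\indexset, \]
and $Z$ is a weakly decreasing PBW monomial in the basis elements of $\gAffLie_{\leq 0}$.

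Next I would apply this basis to $\vacuum$. Since $\mathbf{k}\cdot\vacuum = \vacuum$ and $a^{(i)}(n)\cdot\vacuum = 0$ for $n\geq 0$, the monomial $Z$ annihilates $\vacuum$ unless $Z$ is a pure power of $\mathbf{k}$, in which case $Z\cdot\vacuum = \vacuum$. Consequently, the elements listed in \eqref{eq:basis_V_aff} at least \emph{span} $V_{\gAffLie}(1,0) = U(\gAffLie) \cdot (1\otimes\vacuum)$.

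For linear independence, I would invoke the standard tensor-product form of PBW: writing $\gAffLie_{>0}$ for the Lie subalgebra spanned by the $a^{(i)}(-m)$ with $m>0$, we have the vector space decomposition $\gAffLie = \gAffLie_{>0} \oplus \gAffLie_{\leq 0}$ (where one checks, using \eqref{eq:aff_lie_alg_eqn}, that $\gAffLie_{\leq 0}$ is indeed a Lie subalgebra, so that the $\gAffLie_{\leq 0}$-action on $\baseField\vacuum$ is consistent). The PBW theorem then gives the vector space isomorphism
\[ U(\gAffLie) \;\cong\; U(\gAffLie_{>0}) \otimes_\baseField U(\gAffLie_{\leq 0}), \]
and tensoring with $\baseField\vacuum$ over $U(\gAffLie_{\leq 0})$ yields a $\baseField$-linear isomorphism $U(\gAffLie_{>0}) \xrightarrow{\;\sim\;} V_{\gAffLie}(1,0)$, $x \mapsto x\otimes\vacuum$. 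Applying \cref{theorem:pbw} a second time, now to $\gAffLie_{>0}$ with the ordered basis $\{a^{(i)}(-m)\}$ as above, transports the PBW basis of $U(\gAffLie_{>0})$ onto exactly the family \eqref{eq:basis_V_aff}, proving independence.

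The only subtle step is verifying that $\gAffLie_{\leq 0}$ is a Lie subalgebra and that the prescribed action on $\baseField\vacuum$ is well-defined, but this is a direct check from \eqref{eq:aff_lie_alg_eqn}: for $m,n\geq 0$, the bracket $[a(m),b(n)] = [a,b](m+n) + m\langle a,b\rangle\delta_{m+n,0}\mathbf{k}$ lies in $\gAffLie_{\leq 0}$, and the central term only contributes when $m=n=0$, where it vanishes. Everything else is a bookkeeping exercise in PBW, so no genuine obstacle is expected.
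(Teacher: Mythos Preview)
Your proposal is correct and follows exactly the route the paper intends: the corollary is stated in the paper without proof, as an immediate consequence of the PBW theorem applied to a suitably ordered basis of $\gAffLie$, and your argument spells out precisely this derivation (including the standard $U(\gAffLie_{>0}) \otimes U(\gAffLie_{\leq 0})$ decomposition that makes linear independence transparent). There is nothing to add or correct.
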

\begin{definition}\label{definition:Z_grading_of_V}
	Fix a basis for $\gLie$ and for $V_{\gAffLie}(1,0)$ as in \cref{corollary:basis_for_V}. For each $n \in \ZZ$, we let
	\[ V_{\gAffLie}(1,0)_{(n)} \]
	be the subspace spanned by the basis elements \eqref{eq:basis_V_aff} with $m_1+\dots+m_r=n$.
	In particular, $V_{\gAffLie}(1,0)_{(n)} =0$ for $n<0$. We call $n$ the \emph{degree} of this subspace.

	Note that this gives $V_{\gAffLie}(1,0)$ the structure of a $\ZZ$-graded vector space
	\[ V_{\gAffLie}(1,0)=\bigoplus_{n\in\ZZ} V_{\gAffLie}(1,0)_{(n)} . \]
\end{definition}
\begin{remark}\label{remark:gLie_and_V_1}
	Observe that $\gLie \cong V_{\gAffLie}(1,0)_{(1)}$ as vector spaces, via the isomorphism
	\[ a \mapsto a(-1)\vacuum . \]
\end{remark}
We can now invoke \cite[Proposition 3.3]{LM18}, translated to our setup.
\begin{proposition}\label{proposition:V_is_VA}
	There exists a unique vertex algebra structure on $V_{\gAffLie}(1,0)$ determined by the conditions that $\vacuum \in V_{\gAffLie}(1,0)_{(0)}$ is the vacuum vector and that
	\[ (a(-1)\vacuum)_nb = a(n)b \qquad \text{for all } a\in\gLie, b\in V_{\gAffLie}(1,0)\text{ and } n\in\ZZ . \]
\end{proposition}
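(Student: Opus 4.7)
The plan is to prove uniqueness and existence separately: uniqueness by showing that the given data determines all $n$-th products via the vertex algebra axioms, and existence by constructing these products explicitly via normal-ordered fields and verifying Borcherds' identity.

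For uniqueness, set $u^a \coloneqq a(-1)\vacuum$ for $a \in \gLie$. Combining the $\D$-derivate formula (\cref{def:vertex_algebra}\cref{item:D_derivate_formula}) with the hypothesis $(u^a)_n \vacuum = a(n)\vacuum$ yields $\D^{(m)} u^a = a(-m-1)\vacuum$ for all $m \geq 0$, determining the divided-power translations on $V_{\gAffLie}(1,0)_{(1)}$. The PBW basis of \cref{corollary:basis_for_V} presents every basis vector of length $r \geq 1$ as $a^{(i_1)}(-m_1) w$ with $w$ of length $r-1$, and by hypothesis this equals $(u^{a^{(i_1)}})_{-m_1} w$. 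For any such basis vector $c$ and any $d \in V_{\gAffLie}(1,0)$, the iterate formula (\crefiterateformula) expresses $c_n d$ as a linear combination of terms $(u^{a^{(i_1)}})_p(w_q d)$ and $w_q((u^{a^{(i_1)}})_p d)$, where the $(u^{a^{(i_1)}})_p$-products are prescribed by hypothesis. Proceeding by induction on $r$ (with base cases $c=\vacuum$ and $c=u^a$ handled respectively by the vacuum property and the hypothesis), $c_n d$ is fully determined, yielding uniqueness.

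For existence, I would define, for each $a \in \gLie$, the ``field'' $a(x) \coloneqq \sum_{n \in \ZZ} a(n)\, x^{-n-1}$ acting on $V_{\gAffLie}(1,0)$. The truncation condition is immediate from the $U(\gAffLie)$-module structure, and a direct computation from the defining bracket \eqref{eq:aff_lie_alg_eqn} shows that $\{a(x) : a \in \gLie\} \cup \{\Id\}$ is a mutually local collection. One then extends this to a vertex algebra structure on all of $V_{\gAffLie}(1,0)$ by attaching to each PBW basis vector $a^{(i_1)}(-m_1)\dotsm a^{(i_r)}(-m_r)\vacuum$ a suitable normal-ordered product of divided-power derivatives of the generating fields $a^{(i_j)}(x)$. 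The main obstacle is the verification of Borcherds' identity for the resulting operators: in characteristic zero this is classical \cite[Theorem 6.2.12]{LL04} and uses contour integral techniques, but in arbitrary characteristic one must replace these by combinatorial identities of divided powers $\D^{(m)}$ and binomial coefficients read in $\baseField$; this is the content of \cite[Proposition 3.3]{LM18}, which we invoke.
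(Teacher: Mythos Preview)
Your proposal is correct and aligns with the paper's approach: the paper does not give its own proof but simply invokes \cite[Proposition 3.3]{LM18}, which is exactly the reference you cite for the existence part. Your additional inductive argument for uniqueness via the iterate formula and the PBW basis is standard and sound, and merely spells out what is implicit in (or a consequence of) the cited result.
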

	
We use the vector space isomorphism from \cref{remark:gLie_and_V_1} to make some identifications.
\begin{notation}\label{notation:identify_gLie_and_V}
	Using the identification from \cref{remark:gLie_and_V_1} together with \cref{proposition:V_is_VA}, we have $a_n b = a(n) b$ for all $a \in \gLie$ and all $n \in \ZZ$.
	By repeatedly applying this, we get
	\begin{equation}\label{eq:identify_gLie_and_V}
		a^{(1)}_{-m_1}\dotsm a^{(r)}_{-m_r}b = a^{(1)}(-m_1)\dotsm a^{(r)}(-m_r)b
	\end{equation}
	for all $a^{(1)},\dots,a^{(r)} \in \gLie$, $b \in V_{\gAffLie}(1,0)$ and $m_1,\dots m_r\in\ZZ$.
	In particular, we can rewrite the elements of the form \eqref{eq:basis_V_aff}, which implies that we can write each element of $V_{\gAffLie}(1,0)$ as a linear combination of elements of the form
	\begin{equation}\label{eq:spanning}
		a^{(1)}_{-m_1}\dotsm a^{(r)}_{-m_r} \vacuum
	\end{equation}
	with all $m_i \geq 1$.
\end{notation}
The following result is now an immediate translation of the defining equation \eqref{eq:aff_lie_alg_eqn} for affine Lie algebras.
\begin{proposition}\label{prop:am_bn}
	Let $V = V_{\gAffLie}(1,0)$.
	Let $a,b \in \gLie = V_{(1)}$ and let $m,n \in \ZZ$.
	Then
	\[ a_m b_n = b_n a_m + [a,b]_{m+n} + m \delta_{m,-n} \langle a,b \rangle . \]
\end{proposition}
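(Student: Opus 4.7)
The plan is to view both sides as operators on $V = V_{\gAffLie}(1,0)$ and apply them to an arbitrary $c \in V$. Under the identification $\gLie \cong V_{(1)}$ from \cref{remark:gLie_and_V_1}, \cref{proposition:V_is_VA} together with \cref{notation:identify_gLie_and_V} tells us that $a_n c = a(n) \cdot c$ for every $a \in \gLie$, $n \in \ZZ$, where on the right-hand side $a(n) \in \gAffLie \subseteq U(\gAffLie)$ acts on $V$ via the module structure. By associativity of this action,
\[ a_m b_n c - b_n a_m c \;=\; a(m) \cdot (b(n) \cdot c) - b(n) \cdot (a(m) \cdot c) \;=\; [a(m), b(n)] \cdot c . \]

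Next I would invoke the defining bracket formula \eqref{eq:aff_lie_alg_eqn} to compute
\[ [a(m), b(n)] = [a,b](m+n) + m \langle a, b \rangle \delta_{m+n,0}\, \mathbf{k} \]
in $\gAffLie$, and apply this to $c$. Under the identification \eqref{eq:identify_gLie_and_V}, the first term contributes exactly $[a,b]_{m+n} c$. For the second term, the only thing to check is that $\mathbf{k}$ acts on $V$ as the identity: this is immediate from the construction, since $\mathbf{k}$ is central in $\gAffLie$ and $\mathbf{k} \in \gAffLie_{\leq 0}$, so for every $u \in U(\gAffLie)$ one has $\mathbf{k} \cdot (u \otimes \vacuum) = u \cdot (\mathbf{k} \otimes \vacuum) = u \otimes (\mathbf{k} \cdot \vacuum) = u \otimes \vacuum$. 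Noting that $\delta_{m+n,0} = \delta_{m,-n}$, this yields the stated identity.

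There is essentially no obstacle here: the proposition is, as the preceding text advertises, an immediate translation of the affine Lie algebra commutation relation into vertex-algebra notation, and the only point requiring a moment's care is the remark that $\mathbf{k}$ acts as the identity on the induced module $V_{\gAffLie}(1,0)$.
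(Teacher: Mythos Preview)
Your proof is correct and follows essentially the same approach as the paper: both argue by applying the operators to an arbitrary $c \in V$, invoke the affine Lie algebra relation \eqref{eq:aff_lie_alg_eqn}, and use that $\mathbf{k}$ acts as the scalar $1$ on $V$. Your justification of this last point is slightly more detailed than the paper's, which simply asserts it.
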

\begin{proof}
	By \eqref{eq:aff_lie_alg_eqn}, we have in $U(\gAffLie)$ that
	\[ a(m)b(n) = b(n)a(m) + \left[a,b\right](m+n)+m\left\langle a,b\right\rangle \delta_{m+n,0}\mathbf{k}. \]
	Applying this identity on any $c\in V$, we get, by \eqref{eq:identify_gLie_and_V}, that
	\begin{multline*}
	 	a_mb_nc = a(m)b(n)c = b(n)a(m)c + \left[a,b\right](m+n)c+m\left\langle a,b\right\rangle \delta_{m+n,0}\mathbf kc \cr= b_n a_mc + [a,b]_{m+n}c + m \delta_{m,-n} \langle a,b \rangle c,
	\end{multline*}
	because $\mathbf{k}$ acts as the scalar $1$ on $V$.
\end{proof}
In fact, the vertex algebra $V = V_{\gAffLie}(1,0)$ is a graded vertex algebra of type~O.
\begin{proposition}
	Let $V = V_{\gAffLie}(1,0)$.
	\begin{enumerate}
		\item The grading on $V$ from \cref{definition:Z_grading_of_V} is a vertex algebra grading.
		\item For all $a,b\in V$ and $n\in\ZZ$, there is some $u\in U(\gAffLie)$ such that $a_nb = ub$.
	\end{enumerate}
	\end{proposition}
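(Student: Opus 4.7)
The plan is to prove both parts simultaneously by induction on the \emph{length} $r$ of the spanning of \cref{notation:identify_gLie_and_V}: every element of $V$ is a linear combination of elements
\[ a = a^{(1)}_{-m_1} \cdots a^{(r)}_{-m_r}\vacuum = a^{(1)}(-m_1)\cdots a^{(r)}(-m_r)\vacuum \]
with $a^{(i)} \in \gLie$ and $m_i \geq 1$. Since both claims are linear in $a$, it suffices to treat elements of this form and induct on $r$.

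As a preliminary, I would first verify that for each $x \in \gLie$ and $n \in \ZZ$, the operator $x(n) \colon V \to V$ maps $V_{(m)}$ into $V_{(m-n)}$. This is a direct induction on the length of a PBW basis vector of $V_{(m)}$: using the defining bracket \eqref{eq:aff_lie_alg_eqn}, one moves $x(n)$ past each $a^{(i_j)}(-k_j)$ and checks that the two commutator terms (one involving $[x, a^{(i_j)}](n-k_j)$, the other a possible scalar contribution from $\mathbf{k}$), together with the eventual action on $\vacuum$ (which is zero for $n \geq 0$ and a basis vector of degree $-n$ for $n < 0$), all produce summands of total degree $m - n$.

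For the base case $r = 0$ one has $a = \vacuum \in V_{(0)}$ and $\vacuum_n b = \delta_{n,-1}b$, so $u = \delta_{n,-1} \cdot 1 \in U(\gAffLie)$ works, and the degree is vacuous or equal to $l + m - 1 - n = m$ when $n = -1$. For the inductive step, write $a = (a^{(1)})_{-m_1} c$ with $c = a^{(2)}_{-m_2}\cdots a^{(r)}_{-m_r}\vacuum$ of length $r-1$, viewing $a^{(1)}$ as an element of $V_{(1)}$. Applying the iterate formula \crefiterateformula gives
\[
a_n b = \sum_{i\geq 0} (-1)^i \binom{-m_1}{i}\Bigl(a^{(1)}_{-m_1-i}\,c_{n+i}\,b \;-\; (-1)^{-m_1}\,c_{n-m_1-i}\,a^{(1)}_i\,b\Bigr),
\]
and by the truncation condition applied to the pairs $(c,b)$ and $(a^{(1)},b)$, only finitely many terms contribute.

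Each surviving term is handled via the induction hypothesis for $c$ together with the preliminary lemma. In a term of the first type, (ii) for $c$ gives $c_{n+i}b = u' b$ for some $u' \in U(\gAffLie)$, and the outer action $a^{(1)}_{-m_1-i}$ is multiplication by $a^{(1)}(-m_1-i) \in U(\gAffLie)$, keeping the result in $U(\gAffLie)\,b$; for degrees, (i) for $c$ places $c_{n+i}b$ in $V_{(l_c + m - 1 - n - i)}$, and the preliminary lemma shifts the degree by $m_1 + i$, landing in $V_{(l_c + m_1 + m - 1 - n)} = V_{(l + m - 1 - n)}$, where $l = l_c + m_1$ is itself obtained by applying (i) to $a^{(1)} \in V_{(1)}$ and $c$. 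Terms of the second type are symmetric: $a^{(1)}_i b = a^{(1)}(i) b \in U(\gAffLie)\,b \cap V_{(m-i)}$, and then (ii) and (i) for $c$ put $c_{n-m_1-i}\bigl(a^{(1)}(i) b\bigr)$ inside $U(\gAffLie)\,b \cap V_{(l + m - 1 - n)}$. Summing the finitely many surviving terms yields both claims for $a$. The only real bookkeeping is the degree accounting across the two term types; I do not expect a deeper obstacle.
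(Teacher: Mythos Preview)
Your proof is correct and follows essentially the same route as the paper: both argue by induction on the length $r$ of the spanning element~\eqref{eq:spanning}, invoke the iterate formula to reduce $(a^{(1)}_{-m_1}c)_n b$ to terms involving $c$ and $a^{(1)}$ separately, truncate, and apply the induction hypothesis to $c$ together with the identification $a^{(1)}_k = a^{(1)}(k)$. The one expository difference is that you isolate as a preliminary lemma the fact that $x(n)$ shifts the grading by $-n$ for $x \in \gLie$; the paper uses this implicitly when asserting that ``each term lies in $V_{(l+m-1-n)}$'', so making it explicit is a mild improvement in clarity rather than a change in strategy.
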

	\begin{proof}
		By linearity, it suffices to prove both statements for elements of the form~\eqref{eq:spanning}.
		So assume $a = a^{(1)}_{-m_1}\dots a^{(r)}_{-m_r}\vacuum$ and $b \in V_{(m)}$ with $a^{(1)},\dots,a^{(r)} \in \gLie$, $r,m \in\ZZ_{\geq 0}$ and $m_1,\dots, m_r\in\ZZ$. Write $l := m_1+\dots+m_r$, hence $a\in V_{(l)}$.
		We claim that
		\begin{equation}\label{eq:claim grading}
			a_n b \in V_{(l+m-1-n)} \quad \text{and there is some } u\in U(\gAffLie) \text{ such that } a_nb = ub .
		\end{equation}
		We prove this claim by induction on $r$. For $r=0$, we have $a=\vacuum$, so the claim follows from the vacuum property (\cref{def:vertex_algebra}\ref{item:vacuum_property}) by taking $u\coloneqq \delta_{-1,n} 1$ (where $1$ is the unit in $U(\gAffLie)$).
 
		Assume now that $r>0$ and set $a'\coloneqq a^{(2)}_{-m_2}\dots a^{(r)}_{-m_r}\vacuum$. By the iterate formula (\cref{def:vertex_algebra}\ref{item:iterate_formula}), we have 
		\[ a_nb = \bigl( a^{(1)}_{-m_1} a' \bigr)_n b = \sum_{i=0}^\infty (-1)^i \binom{-m_1}{i} \bigl( a^{(1)}_{-m_1-i}a'_{n+i} - (-1)^{m_1} a'_{n-m_1-i}a_i^{(1)} \bigr) b. \]
		By the truncation condition (\cref{def:vertex_algebra}\ref{item:truncation_condition}), this reduces to a finite sum: there is an $I$ such that for all $i>I$, we have both $a'_{n+i}b=0$ and $a^{(1)}_ib=0$. By the induction hypothesis, each term lies in $V_{(l+m-1-n)}$ and there are $u^{(i)}$ and $v^{(i)}$ in $U(\gAffLie)$ such that
		\begin{align*}
			a^{(1)}_{-m_1-i}a'_{n+i}b &= a^{(1)}_{-m_1-i}u^{(i)}b, & a'_{n-m_1-i}a^{(1)}_ib = v^{(i)}a^{(1)}_ib.
		\end{align*}
		Now set
		\[ u \coloneqq \sum_{i=0}^I (-1)^i \binom{m_1}{i} \bigl( a^{(1)}(-m_1-i)u^{(i)} - (-1)^{m_1} v^{(i)}a^{(1)}(i) \bigr) \in U(\gAffLie). \]
		Then it is clear that $a_nb = ub$.
		This proves the claim \eqref{eq:claim grading}.
	\end{proof}
	\begin{corollary}\label{cor:left_mult}
		Any $\gAffLie$-submodule of $V_{\gAffLie}(1,0)$ is a left ideal.
	\end{corollary}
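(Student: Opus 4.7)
The corollary is essentially an immediate consequence of the second statement of the preceding proposition, so my plan is very short.

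Let $I$ be a $\gAffLie$\dash submodule of $V = V_{\gAffLie}(1,0)$. Since $I$ is closed under the action of $\gAffLie$, it is automatically closed under the action of the universal enveloping algebra $U(\gAffLie)$: any $u \in U(\gAffLie)$ is a linear combination of products of elements of $\gAffLie$, each of which preserves $I$.

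Now to show $I$ is a left ideal, I need to verify $a_n b \in I$ for all $a \in V$, $b \in I$ and $n \in \ZZ$. The preceding proposition provides exactly this: it gives some $u \in U(\gAffLie)$ (depending on $a$ and $n$) such that $a_n b = u b$. Since $b \in I$ and $I$ is $U(\gAffLie)$\dash stable, we get $u b \in I$, hence $a_n b \in I$. This is all that is required. There is no real obstacle here; the work was already done in establishing that every left multiplication operator $a_n$ is realized by the action of some element of $U(\gAffLie)$ on the target vector $b$.
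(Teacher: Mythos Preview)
Your argument is correct and is exactly the intended one: the corollary is an immediate consequence of part (ii) of the preceding proposition, and the paper simply states it as such without further proof. One small remark: the element $u \in U(\gAffLie)$ produced by the proposition may depend on $b$ as well (not just on $a$ and $n$), as is clear from the inductive construction in its proof; this does not affect your argument at all, since you only need $a_n b = ub$ for the particular $b \in I$ at hand.
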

	\begin{definition}\label{definition:L}
	The algebra $V_{\gAffLie}(1,0)$ is a graded vertex algebra of type O, so by \cref{lemma:simple_vertex_algebra}, it has a unique maximal graded ideal, which we denote by $I_{\gAffLie}(1,0)$.
	Moreover, the quotient
	\[ L_{\gAffLie}(1,0) \coloneqq V_{\gAffLie}(1,0)/I_{\gAffLie}(1,0). \]
	is a simple graded vertex algebra.
\end{definition}
\begin{remark}
	This grading on $V_{\gAffLie}(1,0)$ induces (non-associative) algebra structures and bilinear forms on the components: the $(n-1)$-st product defines a product on $V_{\gAffLie}(1,0)_{(n)}$ and the $(2n-1)$-st product defines a bilinear form on $V_{\gAffLie}(1,0)_{(n)}$, because $V_{\gAffLie}(1,0)_{(0)}$ is one-dimensional.
	For $n=1$, we recover the Lie bracket and the bilinear form $\kappa$; see \cref{lemma:comp}\ref{lemma:comp:lie_bracket} and \ref{lemma:comp:form} below.
\end{remark}
%
%
	
We collect some specific computations that we will need in \cref{section:results}.
\begin{lemma}\label{lemma:comp}
	Let $V = V_{\gAffLie}(1,0)$ and let $a,b,c \in \gLie = V_{(1)}$.
	Then
	\begin{enumerate}
		\item\label{lemma:comp:lie_bracket} $a_0b = [a,b].$
		\item\label{lemma:comp:form} $a_1b = \langle a,b\rangle\vacuum.$
		\item\label{lemma:comp:a0} $a_0 b \1 c = b \1 [a,c] + [a,b] \1 c$.
		\item\label{lemma:comp:a1} $a_1 b \1 c = [[a,b], c] +  \langle a,b \rangle c + \langle a,c \rangle b$.
		\item\label{lemma:comp:a2} $a_2 b \1 c =  \bigl\langle [a,b], c \bigr\rangle \vacuum$.
		\item\label{lemma:comp:a2sym} $a_2 b \1 b = 0$.
		\item\label{lemma:comp:a0a0} $a_0 a_0 b \1 b = 2 [a,b] \1 [a,b] + b \1 [a,[a,b]] + [a,[a,b]] \1 b$.
		\item\label{lemma:comp:a-1a1} $a \1 a_1 b \1 b = a \1 [[a,b],b] + 2  \langle a,b \rangle a \1 b$.
		\item\label{lemma:comp:a2a0} $a_2 a_0 b\1 b = 0$.
		\item\label{lemma:comp:a1a1} $a_1 a_1 b\1 b = 2  \langle a,b \rangle^2 \vacuum - \langle [a,b], [a,b] \rangle \vacuum$.
		\item\label{lemma:comp:aa1b} $(a\1 a)_1 b = \bigl[ a,[a,b]\bigr] + 2 \langle a,b\rangle a = b_1a\1 a$. 
		\item\label{lemma:comp:a2b} $(a\2 \vacuum)_1 b = -[a,b]$.
	\end{enumerate}
\end{lemma}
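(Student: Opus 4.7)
The plan is to treat items (i)--(x) uniformly via the commutation identity from \cref{prop:am_bn}, and to treat items (xi)--(xii) via the iterate formula (\crefiterateformula), everywhere exploiting the grading of \cref{definition:Z_grading_of_V}, which in particular yields the vanishing $a_n c = 0$ whenever $a,c \in \gLie = V_{(1)}$ and $n \geq 2$ (as then $a_n c \in V_{(1-n)} = 0$), and similarly $a_n \vacuum = 0$ for $n \geq 0$. Throughout we use the identification $b = b_{-1}\vacuum$ from \cref{notation:identify_gLie_and_V}.

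For item (i), apply \cref{prop:am_bn} to get $a_0 b_{-1}\vacuum = b_{-1}a_0\vacuum + [a,b]_{-1}\vacuum = [a,b]$; for item (ii), the same with $m=1,n=-1$ produces $a_1 b_{-1}\vacuum = b_{-1}a_1\vacuum + [a,b]_0\vacuum + \langle a,b\rangle\vacuum = \langle a,b\rangle\vacuum$. Items (iii)--(v) all follow by the same commutation of $a_n$ past $b_{-1}$ applied to $c\in V_{(1)}$ (for (v), the key is that $a_2 c = 0$ by the grading, leaving only the bracket term whose action is obtained via (ii)). Item (vi) is then immediate from (v) using the invariance $\langle [a,b],b\rangle = \langle a,[b,b]\rangle = 0$.

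Items (vii)--(x) reduce to iterating (iii) and (iv): for (vii), apply (iii) twice, first computing $a_0(b\1 b) = b\1 [a,b] + [a,b]\1 b$ and then applying $a_0$ once more. For (viii), apply $a_{-1}$ to the formula (iv) evaluated at $c=b$. For (ix), start from $a_0(b\1b) = b\1[a,b] + [a,b]\1 b$ and apply $a_2$ to each term via (v); the two scalars obtained are $\langle [a,b],[a,b]\rangle$ and $\langle [a,[a,b]],b\rangle$, which cancel by invariance (since $\langle [a,[a,b]],b\rangle = -\langle [a,b],[a,b]\rangle$). For (x), apply $a_1$ to the expression in (iv) with $c=b$ and use (ii) together with the same invariance identity, yielding $-\langle [a,b],[a,b]\rangle\vacuum + 2\langle a,b\rangle^2\vacuum$.

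Finally, for (xi) and (xii) the iterate formula is needed. For (xi), expanding $(a_{-1}a)_1 b$ with $m=-1,n=1$ gives $\sum_{i\geq 0}(a_{-1-i}a_{1+i} + a_{-i}a_i)b$; grading forces $a_{1+i}b = 0$ for $i\geq 1$ and $a_i b = 0$ for $i \geq 2$, leaving only the contributions from $i=0,1$, which combine via (i) and (ii) to $[a,[a,b]] + 2\langle a,b\rangle a$. The second equality $b_1 a\1 a$ is a direct application of \cref{prop:am_bn}. For (xii), expanding $(a_{-2}\vacuum)_1 b$ with $m=-2,n=1$ using $\binom{-2}{i} = (-1)^i(i+1)$, only the term where $\vacuum_{-1-i}$ is nonzero survives (namely $i=0$), giving $-\vacuum_{-1}a_0 b = -[a,b]$. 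The main obstacle is purely bookkeeping: correctly identifying which terms in the iterate-formula sums survive under grading, and carefully tracking the invariance manipulations in (ix)--(x).
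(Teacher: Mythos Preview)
Your proof is correct and follows essentially the same route as the paper: commuting $a_m$ past $b_{-1}$ via \cref{prop:am_bn} for items (i)--(x), and the iterate formula for (xi)--(xii), with grading and invariance controlling the vanishing. The only notable deviation is in (ix), where the paper observes directly that $a_2 a_0 = a_0 a_2$ (from \cref{prop:am_bn} with $[a,a]=0$) and then invokes (vi), whereas you expand $a_0(b\1 b)$ first and cancel the resulting scalars by invariance; both arguments are equally short and valid.
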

\begin{proof}
	For each statement, the strategy is similar:
	\begin{itemize}[itemsep=0ex]
		\item Move operators $a_m$ with $m \geq 0$ to the right by rewriting $a_m b_n$ using \cref{prop:am_bn};
		\item Cancel terms $a^{(1)}_{m_1} \dotsm a^{(i)}_{m_i} b$ whenever $m_1 + \dots + m_i \geq 2$;
		\item Use the creation property $a \1 \vacuum = a$ and $a_n \vacuum = 0$ for $n\geq 0$;
		\item Apply \cref{lemma:comp:lie_bracket,lemma:comp:form} if necessary;
		\item Invoke $\gLie$-invariance (\cref{def:Lie_bilinear_form}) if necessary;
		\item Use the iterate formula (\cref{def:vertex_algebra}\ref{item:iterate_formula}) if necessary, and truncate using $a_nb = 0$ whenever $n \geq 2$. 
	\end{itemize}
	\begin{enumerate}
		\item $a_0b = a_0b_{-1}\vacuum = b_{-1}a_0\vacuum + [a,b]_{-1}\vacuum = [a,b].$
		\item $a_1b = a_1b_{-1}\vacuum = b_{-1}a_1\vacuum + [a,b]_{0}\vacuum + \langle a,b \rangle \vacuum= \langle a,b \rangle \vacuum.$
		\item $a_0 b \1 c = (b \1 a_0 + [a,b]\1) c = b \1 a_0 c + [a,b]\1 c = b \1 [a,c] + [a,b] \1 c$.
		\item $a_1 b \1 c = (b \1 a_1 + [a,b]_0 + \langle a,b \rangle) c = \langle a,c \rangle b + [[a,b], c] + \langle a,b \rangle c$.
		\item $a_2 b \1 c = (b \1 a_2 + [a,b]_1) c = \langle [a,b], c \rangle \vacuum$.
		\item By $\gLie$-invariance, $\big\langle [a,b], b \big\rangle = \big\langle a, [b, b] \big\rangle = 0$, so this follows from \ref{lemma:comp:a2} with $b=c$.
		\item We apply $a_0$ on \ref{lemma:comp:a0} with $b=c$ and we get, by two more applications of \ref{lemma:comp:a0},
			\begin{align*}
				a_0 a_0 b \1 b
				&= a_0 \bigl( b \1 [a,b] + [a,b] \1 b \bigr) \\
				&= \bigl( b \1 [a, [a,b]] + [a,b]\1[a,b] \bigr) + \bigl( [a,b]\1[a,b] + [a,[a,b]]\1b \bigr) .
			\end{align*}
		\item We apply $a\1$ on \ref{lemma:comp:a1} with $b=c$.
		\item 
			Since $a_0 a_2 = a_2 a_0$, this follows from \ref{lemma:comp:a2sym}.
		\item We apply $a_1$ on \ref{lemma:comp:a1} with $b=c$ and we apply $\gLie$-invariance to get
			\begin{align*}
				a_1 a_1 b\1 b
				&= a_1 [[a,b],b] + 2  \langle a,b \rangle a_1 b \\
				&=  \bigl\langle a, [[a,b], b] \bigr\rangle \vacuum + 2  \langle a,b \rangle \langle a,b \rangle \vacuum \\
				&= 2  \langle a,b \rangle^2 \vacuum -  \bigl\langle [a,b], [a,b] \bigr\rangle \vacuum .
			\end{align*}
		\item Using the iterate formula, we get $(a\1 a)_1 b = a_0 a_0 b + 2 a\1 a_1 b = \bigl[ a,[a,b]\bigr] + 2 \langle a,b\rangle a$. By \ref{lemma:comp:a1}, this is also equal to $b_1 a\1 a$.
		\item Using the iterate formula and the vacuum property, we get $(a\2 \vacuum)_1 b = -\vacuum\1 a_0 b = -[a,b]$.
		\qedhere
	\end{enumerate}
\end{proof}
\begin{lemma}\label{lemma:comp_aabb}
	Let $V = V_{\gAffLie}(1,0)$ and let $a,b \in \gLie = V_{(1)}$.
	Then
	\[ a \1 [[a,b],b] - [[a,b],b] \1 a + b \1 [a,[a,b]] - [a,[a,b]] \1 b = 0  . \]
\end{lemma}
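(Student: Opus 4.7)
The plan is to rewrite each of the four terms so that the common structure becomes apparent. The key tool is skew symmetry (\cref{def:vertex_algebra}\ref{item:skew_symmetry}) applied at $n = -1$: for any $x, y \in \gLie = V_{(1)}$, the truncation condition (all terms $y_i x$ with $i \geq 2$ vanish because they would land in negative degree) reduces the skew symmetry identity to
\[
x\1 y - y\1 x = -\D(y_0 x) + \D^{(2)}(y_1 x) = \D [x,y] + \langle x, y \rangle \D^{(2)} \vacuum,
\]
using \cref{lemma:comp}\cref{lemma:comp:lie_bracket,lemma:comp:form}.

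First I would apply this identity with the pair $(x,y) = (a, [[a,b],b])$ and with $(x,y) = (b, [a,[a,b]])$. Adding the two resulting expressions, the left-hand side becomes exactly the combination appearing in the statement, while the right-hand side becomes
\[
\D\bigl([a, [[a,b],b]] + [b, [a,[a,b]]]\bigr) + \bigl(\langle a, [[a,b],b]\rangle + \langle b, [a,[a,b]]\rangle\bigr) \D^{(2)} \vacuum.
\]

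Now I would show that both coefficients vanish. For the first, by the Jacobi identity in $\gLie$,
\[
[a,[[a,b],b]] = [[a,[a,b]], b] + [[a,b],[a,b]] = [[a,[a,b]], b] = -[b, [a,[a,b]]],
\]
so the argument of $\D$ is zero. For the second, by the invariance of $\kappa$ (\cref{def:Lie_bilinear_form}),
\[
\langle a, [[a,b],b]\rangle = \langle [a, [a,b]], b \rangle = -\langle b, [a,[a,b]]\rangle,
\]
so the coefficient of $\D^{(2)} \vacuum$ is also zero. Hence the full sum is $0$, as required.

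The only mildly delicate point is bookkeeping in the skew symmetry formula: one must check that the series terminates after the $i = 2$ term (which it does, since $y_i x = 0$ for $i \geq 2$ when $x, y \in \gLie$) and that the signs match, but the rest is pure Lie-algebra manipulation using Jacobi and invariance.
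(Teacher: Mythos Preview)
Your overall strategy is the same as the paper's: rewrite each difference $x\1 y - y\1 x$ in terms of $[x,y]$ and then kill the sum with the Jacobi identity. (The paper obtains $x\1 y - y\1 x = [x,y]\2\vacuum$ directly from \cref{prop:am_bn}, which is exactly your identity once one notes $\D[x,y] = [x,y]\2\vacuum$.)

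However, your handling of the $\D^{(2)}\vacuum$ term contains a sign error. From invariance you correctly get $\langle a,[[a,b],b]\rangle = \langle [a,[a,b]],b\rangle$, but since $\kappa$ is \emph{symmetric} this equals $+\langle b,[a,[a,b]]\rangle$, not $-\langle b,[a,[a,b]]\rangle$. In fact both scalars equal $-\langle [a,b],[a,b]\rangle$, so their sum is $-2\langle [a,b],[a,b]\rangle$, which need not vanish.

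The fix is immediate: the term is irrelevant because $\D^{(2)}\vacuum = \vacuum_{-3}\vacuum = 0$ by the vacuum property \cref{def:vertex_algebra}\ref{item:vacuum_property}. So you should simply drop the $\D^{(2)}$ term from your skew-symmetry identity at the outset (or absorb it by noting $\D^{(2)}\vacuum = 0$), leaving $x\1 y - y\1 x = \D[x,y]$, after which your Jacobi argument finishes the proof exactly as in the paper.
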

\begin{proof}
	By \cref{prop:am_bn}, we have $a \1 b \1 - b \1 a \1 = [a,b] \2$, so by applying this on~$\vacuum$, we get
	\[ a \1 b - b \1 a = [a,b] \2 \vacuum . \]
	Hence
	\begin{multline*}
		a \1 [[a,b],b] - [[a,b],b] \1 a + b \1 [a,[a,b]] - [a,[a,b]] \1 b \\
			= \bigl( \bigl[ a, [[a,b],b] \bigr] + \bigl[ b, [a, [a,b]] \bigr] \bigr) \2 \vacuum .
	\end{multline*}
	By the Jacobi identity, the right hand side is equal to $\bigl[ [b,a], [a,b] \bigr] \2 \vacuum = 0$.
\end{proof}

\section{Constructing $A(\gLie, \kappa)$ inside $L_{\gAffLie}(1,0)$}\label{section:results}

We are now prepared to reconstruct the Chayet--Garibaldi algebra $A(\gLie, \kappa)$ inside the simple graded vertex algebra $L_{\gAffLie}(1,0)$.
Recall \cref{definition:L} and let us fix the notation $V$, $I$ and $L$ for $V_{\gAffLie}(1,0)$, $I_{\gAffLie}(1,0)$ and $L_{\gAffLie}(1,0)$, respectively.

It will turn out that the Chayet--Garibaldi algebra can be obtained by restricting the degree two component $L_{(2)}$ of $L$ to an appropriate subspace
\[ L\twosym \coloneqq \left\langle a\1b+b\1a\mid a,b \in L_{(1)}\right\rangle \leq L_{(2)} \]
and endowing it with the ``Jordan product'' $d \bullet e := \tfrac{1}{2} (d_1e + e_1d)$ and with the bilinear form $(d,e) \mapsto d_3e$.

Before presenting the details, we give an outline of the procedure for the proof.
Notice that, even though the construction can be obtained directly from the simple graded vertex algebra $L$, the proof will be performed completely inside the original affine vertex algebra $V$; only in the final stage of the proof, we will pass to the quotient~$L$.
\begin{enumerate}[itemsep=0ex]
	\item First, we relate the symmetric square $\symsq \gLie$ with the subspace $V\twosym$ of $V_{(2)}$. We show that the product $*$ defined in  \eqref{formula:star} corresponds to the Jordan product obtained from the product $(d,e) \mapsto d_1e$ on $V_{(2)}$ (\cref{prop:theta_is_isomorphism}) and that the bilinear form $\tau$ defined in \cref{definition:tau} corresponds to the map $(d,e) \mapsto d_3e$ on $V\twosym$ (\cref{prop:tau}).
	\item Next, we interpret $\ker S$---see \eqref{def:chga_S}---in the context of $V_{(2)}$ (\cref{lemma:kernels_are_equal}). At this point, we are able to construct the algebra $A(\gLie, \kappa)$ from the vertex algebra $V$, without having to define $*$ or $S$ explicitly (\cref{cor:A_isom_quotient}).
	\item We then show that $\ker S$ generates a proper ideal of $V_{(2)}$ (\cref{lemma:ideal_ker_T}). Consequently, this ideal coincides with the intersection of the maximal graded ideal $I$ with $V_{(2)}$ (\cref{cor:kerT_and_maximal_ideal}).
	\item We finally take the quotient $L$ of $V$ by this maximal graded ideal $I$. The main result now follows by combining all previous steps (\cref{theorem:main_result}).
\end{enumerate}

\medskip

We will now go through the different steps of this process in detail.
We continue to impose \cref{assumptions:lie_algebras}.
We also continue to identify $\gLie$ with $V_{(1)}$ (as vector spaces) via the map $\gLie \to V_{(1)} \colon a \mapsto a(-1)\vacuum$.

\begin{lemma}\label{lemma:basis_V2}
	Let $\{ a^{(i)} \mid i\in \indexset \}$ be a totally ordered basis for $\gLie$. Then the set
	\[ \mathcal B \coloneqq \bigl\{ {a^{(i)}}\!\2\vacuum\mid i\in \indexset \bigr\} \ \cup\  \bigl\{{a^{(i)}}\!\1a^{(j)} + {a^{(j)}}\!\1a^{(i)} \mid i, j \in \indexset, i \geq j \bigr\} \]
	is a basis for $V_{(2)}$.
\end{lemma}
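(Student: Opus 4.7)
The plan is to show that $\mathcal{B}$ differs from the PBW basis of $V_{(2)}$ (obtained from \cref{corollary:basis_for_V}) by an invertible change of coordinates, using crucially that $\Char(k) \neq 2$.

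First I would specialize \cref{corollary:basis_for_V} to degree $2$: the basis elements of $V_{(2)}$ are those \eqref{eq:basis_V_aff} with $m_1 + \dots + m_r = 2$, so either $r = 1$ with $m_1 = 2$, giving $\{a^{(i)}\2\vacuum \mid i \in \indexset\}$, or $r = 2$ with $m_1 = m_2 = 1$, for which the lexicographic condition $(1,i_1) \geq (1,i_2)$ amounts to $i_1 \geq i_2$, giving $\{a^{(i)}\1 a^{(j)}\1 \vacuum \mid i \geq j\} = \{a^{(i)}\1 a^{(j)} \mid i \geq j\}$. Call this basis $\mathcal{B}'$.

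Next, I would apply \cref{prop:am_bn} with $m = n = -1$ (so that $\delta_{m,-n} = 0$) to obtain the commutation relation
\[ a^{(j)}\1 a^{(i)}\1 = a^{(i)}\1 a^{(j)}\1 + [a^{(j)}, a^{(i)}]\2 \]
in $\End(V)$. Evaluating at $\vacuum$ and using the $\D$-derivate formula (or equivalently the identification from \cref{notation:identify_gLie_and_V}) gives
\[ a^{(j)}\1 a^{(i)} = a^{(i)}\1 a^{(j)} + [a^{(j)}, a^{(i)}]\2 \vacuum . \]
Writing $[a^{(j)}, a^{(i)}]$ as a linear combination of the $a^{(k)}$, we see that for $i \geq j$,
\[ a^{(i)}\1 a^{(j)} + a^{(j)}\1 a^{(i)} = 2\, a^{(i)}\1 a^{(j)} + \sum_k c_{ij}^k \, a^{(k)}\2 \vacuum \]
for suitable scalars $c_{ij}^k$.

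Finally, I would conclude by a triangularity argument. Ordering $\mathcal{B}$ so that the elements $a^{(i)}\2 \vacuum$ come first, the change-of-basis matrix from $\mathcal{B}$ to $\mathcal{B}'$ is block lower-triangular: the first block is the identity, and the second block is scalar multiplication by $2$ together with a correction matrix whose image lies in the span of the first block. Since $2 \neq 0$ in $k$ by \cref{assumptions:lie_algebras}\ref{assumptions:lie_algebras:characteristic}, this matrix is invertible, so $\mathcal{B}$ spans $V_{(2)}$ and has the same cardinality as $\mathcal{B}'$, hence is itself a basis. There is no genuine obstacle here; the only delicate point is to notice that the lack of the $\delta_{m,-n}$ term in \cref{prop:am_bn} is precisely what keeps the commutator purely inside the degree-$2$ layer, making the triangularity clean.
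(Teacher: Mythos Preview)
Your proposal is correct and follows essentially the same route as the paper: both start from the PBW basis $\mathcal{B}'$ of $V_{(2)}$ from \cref{corollary:basis_for_V}, use \cref{prop:am_bn} with $m=n=-1$ to express the symmetrized element $a^{(i)}\1 a^{(j)} + a^{(j)}\1 a^{(i)}$ as $2\,a^{(i)}\1 a^{(j)}$ plus an element of $\langle a^{(k)}\2\vacuum\rangle$, and conclude by the resulting block-triangular change of basis (invertible since $\Char(k)\neq 2$). One minor remark: your closing phrase ``same cardinality as $\mathcal{B}'$, hence is a basis'' is not quite the right justification when $\gLie$ may be infinite-dimensional; the honest reason is that the triangular transition map is explicitly invertible, which gives both spanning and linear independence directly.
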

\begin{proof}
	For all $i, j \in \indexset$, we let
	\[ b_i \coloneqq {a^{(i)}}\!\2\vacuum \qquad \text{and} \qquad c_{ij} \coloneqq {a^{(i)}}\!\1a^{(j)} . \]
	By \cref{definition:Z_grading_of_V}, the space $V_{(2)}$ has a basis
	\[ \mathcal B' = \{ b_i \mid i\in \indexset\}\cup \{ c_{ij} \mid i,j \in \indexset, i \geq j \}. \]
	By \cref{prop:am_bn}, we have
	\[ \bigl[ a^{(i)},a^{(j)} \bigr]\2 = {a^{(i)}}\!\1a^{(j)}\!\1 - {a^{(j)}}\!\1a^{(i)}\!\1 , \]
	hence
	\[ 2c_{ij} =  c_{ij} + c_{ji} + \bigl[ a^{(i)},a^{(j)} \bigr]\2\vacuum . \]
	Since $\bigl[ a^{(i)},a^{(j)} \bigr]\2\vacuum \in \langle b_i \mid i \in \indexset \rangle$, it follows that also $\mathcal B$ is a basis for $V_{(2)}$.
\end{proof}
\begin{definition}
	We define the subspace
	\[ V\twosym \coloneqq \left\langle a\1b+b\1a\mid a,b \in \gLie = V_{(1)}\right\rangle \leq V_{(2)} . \]
\end{definition}
\begin{lemma}\label{lemma:V_2_direct_sum_decomposition}
	We have a decomposition $V_{(2)} = \mathcal D V_{(1)} \oplus V\twosym.$
\end{lemma}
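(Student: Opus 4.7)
The plan is to read off the decomposition directly from the basis of $V_{(2)}$ provided by \cref{lemma:basis_V2}, which partitions a spanning set of $V_{(2)}$ into two natural pieces.

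First, I would identify the two summands with the two halves of the basis $\mathcal{B}$. By the $\D$\dash derivate formula (\cref{def:vertex_algebra}\ref{item:D_derivate_formula}), we have $a\2\vacuum = \D^{(1)} a = \D a$ for every $a \in V_{(1)}$. Hence, since $\{a^{(i)} \mid i \in \indexset\}$ is a basis of $\gLie = V_{(1)}$, the set $\{a^{(i)}\2\vacuum \mid i \in \indexset\}$ spans $\D V_{(1)}$. On the other hand, by linearity in both arguments and by symmetry in $i, j$, the subspace $V\twosym$ is spanned exactly by $\{a^{(i)}\1 a^{(j)} + a^{(j)}\1 a^{(i)} \mid i \geq j \in \indexset\}$.

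From here the decomposition is immediate: \cref{lemma:basis_V2} tells us that the union of these two spanning sets is a basis of $V_{(2)}$. In particular, $\D V_{(1)} + V\twosym = V_{(2)}$, and the fact that the union is linearly independent guarantees the intersection is trivial, giving the direct sum $V_{(2)} = \D V_{(1)} \oplus V\twosym$.

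There is no real obstacle here; the work has already been done in establishing the basis in \cref{lemma:basis_V2}. The only small thing to keep in mind is to invoke the $\D$\dash derivate formula to translate between $a\2\vacuum$ and $\D a$, so that the first half of the basis is correctly identified with $\D V_{(1)}$ and not just with some abstract subspace.
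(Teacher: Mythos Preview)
Your proposal is correct and follows essentially the same approach as the paper: both identify $\D a^{(i)} = a^{(i)}\2\vacuum$ via the $\D$\dash derivate formula and then read off the direct sum decomposition immediately from the basis $\mathcal{B}$ of \cref{lemma:basis_V2}. The paper compresses this into a single sentence, but the content is identical.
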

\begin{proof}
	Since $\mathcal Da^{(i)} = {a^{(i)}}\!\2\vacuum$ for all $a \in \gLie = V_{(1)}$, the result follows from \cref{lemma:basis_V2}.
\end{proof}
We can now identify the symmetric square $\symsq \gLie$ with $V\twosym$.
\begin{definition}
	We define a $k$-linear map
	\[
		\thetaAlt \colon \symsq \gLie \to V\twosym \colon 4ab \mapsto a\1b+b\1a .
	\]
	Since $\thetaAlt$ maps the basis $\{ a^{(i)}a^{(j)} \mid i,j\in\indexset, i \geq j \}$ of $\symsq \gLie$ to a scalar multiple of the basis $\bigl\{{a^{(i)}}\!\1a^{(j)} + {a^{(j)}}\!\1a^{(i)} \mid i, j \in \indexset, i \geq j \bigr\}$ of $V\twosym$,
	it is an isomorphism (of vector spaces).
\end{definition}
\begin{remark}
	Because of the factor $4$ in the definition of $\thetaAlt$, we have $\thetaAlt(aa) = \frac12a_{-1}a$ for all $a \in \gLie$.
	By \cref{lemma:comp}\ref{lemma:comp:aa1b}, we then have
	\begin{equation}\label{eq:action_V2_on_V1}
		\thetaAlt(aa)_1b = \tfrac12(a_{-1}a)_1b = \tfrac12S(aa)b
	\end{equation}
	for all $a,b\in\gLie$.
	(Recall the definition of $S$ from \eqref{def:chga_S} in \cref{definition:Ag}.)
\end{remark}
\begin{definition}\label{def:T}
	We define the map	
	\[ \Talt \colon V_{(2)}\to \End(V_{(1)}) \colon d \mapsto 2d_1 \restrict{V_{(1)}}.\]
\end{definition}	
\begin{lemma}
	The map $S$ from \cref{definition:Ag} coincides with $\Talt \circ \thetaAlt$.
\end{lemma}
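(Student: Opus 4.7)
The proof should be very short: both $S$ and $T \circ \theta$ are $k$-linear maps from $\symsq\gLie$ to $\End(\gLie) = \End(V_{(1)})$, and $\symsq\gLie$ is spanned by the "diagonal" elements $aa$ with $a \in \gLie$, so it suffices to verify equality on such elements.

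The plan is to unwind the definitions on a generator $aa$. By the remark preceding \crefnosort{def:T}, we have $\theta(aa) = \tfrac12 a_{-1}a$, so
\[
	(T \circ \theta)(aa) = T\bigl(\tfrac12 a_{-1}a\bigr) = 2 \cdot \tfrac12 (a_{-1}a)_1\restrict{V_{(1)}} = (a_{-1}a)_1\restrict{V_{(1)}}.
\]
Now \cref{lemma:comp}\cref{lemma:comp:aa1b} (which is exactly the content recorded in \eqref{eq:action_V2_on_V1}) gives
\[
	(a_{-1}a)_1 b = \bigl[ a,[a,b]\bigr] + 2\langle a,b\rangle a
\]
for every $b \in \gLie$, and the right-hand side is precisely $S(aa)b$ by the defining formula \eqref{def:chga_S}. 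Extending linearly from the spanning set yields $S = T \circ \theta$.

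There is no real obstacle here; the whole content has been isolated in \cref{lemma:comp}\cref{lemma:comp:aa1b}, and the only thing to watch is the scalar factor of $\tfrac12$ in $\theta(aa)$, which is exactly cancelled by the factor of $2$ in the definition of $T$. This is the reason the normalizations in the definitions of $\theta$ and $T$ were chosen as they were.
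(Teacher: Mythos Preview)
Your proof is correct and follows essentially the same approach as the paper, which simply cites the definitions of $S$ and $T$ together with \eqref{eq:action_V2_on_V1}. You have merely unpacked these references explicitly, including the cancellation of the scalar factors.
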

\begin{proof}
	This follows from the definitions of $S$ and $\Talt$ together with \eqref{eq:action_V2_on_V1}.
\end{proof}
It is now natural to translate the commutative product $*$ on $\symsq\gLie$ (see \eqref{formula:star}) to $V_{(2)}$.
The somewhat complicated product $*$ has a surprisingly simple interpretation in $V_{(2)}$.
Since the vertex algebra product $_1$ is not commutative, it is natural to make it into a commutative product via the ``Jordan product''.
\begin{definition}\label{def:bullet}
	We define a product $\bulletAlt$ on $V_{(2)}$ by setting
	\[ d \bulletAlt e \coloneqq \tfrac1{2}(d_1e + e_1d) \]
	for all $d,e \in V_{(2)}$.
\end{definition}
\begin{proposition}\label{prop:theta_is_isomorphism}
	The map $\thetaAlt$ is an algebra isomorphism from $(\symsq \gLie,*)$ to $(V\twosym,\bulletAlt)$.
	In particular, $V\twosym$ is a subalgebra of $V_{(2)}$ for the product $\bulletAlt$. 
\end{proposition}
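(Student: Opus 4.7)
The plan is to verify directly that $\thetaAlt$ intertwines the two products; since $\thetaAlt$ is already known to be a linear bijection, nothing else is required. By bilinearity and commutativity of both products it suffices to show
\[ \thetaAlt(aa*bb) = \thetaAlt(aa)\bulletAlt \thetaAlt(bb) \qquad \text{for all } a,b\in\gLie. \]
Using the identification $\thetaAlt(aa)=\tfrac12 a\1 a$, the right-hand side expands as
\[ \thetaAlt(aa)\bulletAlt\thetaAlt(bb) = \tfrac{1}{8}\bigl((a\1 a)_1(b\1 b) + (b\1 b)_1(a\1 a)\bigr), \]
so the whole question reduces to computing the mixed product $(a\1 a)_1(b\1 b)$ and its symmetric counterpart, then matching the result against the explicit four-term sum defining $aa*bb$.

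First I would apply the iterate formula \crefiterateformula{} to $(a\1 a)_1$, yielding
\[ (a\1 a)_1 c = \sum_{i\geq 0}\bigl(a_{-1-i}a_{1+i}+a_{-i}a_i\bigr)c. \]
Setting $c=b\1 b$ and using $a_j(b\1 b)=0$ for $j\geq 3$ (visible from \cref{prop:am_bn} combined with $a_j b=0$ for $j\geq 2$), all but finitely many terms drop out; one is left with $2 a\1 a_1 b\1 b + a_0 a_0 b\1 b + a\2 a_2 b\1 b$. These three pieces are exactly the content of \cref{lemma:comp}\cref{lemma:comp:a-1a1,lemma:comp:a0a0,lemma:comp:a2sym} (the last vanishes by $\gLie$-invariance). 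Swapping $a$ and $b$ and adding gives a closed expression for $8\,\thetaAlt(aa)\bulletAlt\thetaAlt(bb)$.

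Next I would write out $8\,\thetaAlt(aa*bb)$ directly from \eqref{formula:star} and the defining formula $\thetaAlt(4xy)=x\1 y + y\1 x$. The terms proportional to $\langle a,b\rangle(a\1 b+b\1 a)$ and to $[a,b]\1[a,b]$ match on the nose after using $[[a,b],b]=[b,[b,a]]$ and $[[b,a],a]=[a,[a,b]]$. Comparing the remaining terms, the identity to be proven collapses exactly to
\[ a\1[b,[b,a]] - [b,[b,a]]\1 a + b\1[a,[a,b]] - [a,[a,b]]\1 b = 0, \]
which is precisely \cref{lemma:comp_aabb}.

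The only real obstacle is the bookkeeping in the expansion of $(a\1 a)_1(b\1 b)$ and keeping track of which identities from \cref{lemma:comp} apply where; once this is done, \cref{lemma:comp_aabb} supplies the final missing link, so the proof is essentially an accounting exercise. The ``in particular'' clause is automatic: since $\thetaAlt$ surjects onto $V\twosym$ and the image of a product lies in $V\twosym$, the subspace $V\twosym$ is closed under $\bulletAlt$.
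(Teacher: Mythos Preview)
Your proof is correct and follows essentially the same route as the paper: expand $(a\1 a)_1(b\1 b)$ via the iterate formula, evaluate the surviving pieces with \cref{lemma:comp}\cref{lemma:comp:a0a0,lemma:comp:a-1a1,lemma:comp:a2sym}, symmetrize, and finish with \cref{lemma:comp_aabb}. One small slip: the truncated expansion should read $a_0a_0 + 2a\1 a_1 + 2a\2 a_2$ (not $a\2 a_2$ with coefficient~$1$), but since that term vanishes by \cref{lemma:comp}\cref{lemma:comp:a2sym} this does not affect the argument.
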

\begin{proof}	
	Let $a,b \in \gLie = V_{(1)}$ be arbitrary.
	By the iterate formula (\cref{def:vertex_algebra}\cref{item:iterate_formula}) and the fact that $a_i V_{(2)} = 0$ whenever $i \geq 3$ (by \eqref{eq:graded}),
	we have
	\[ (a\1a)_1(b\1b)
		= \sum_{i\geq 0}\left(a_{-1-i}a_{1+i} + a_{-i}a_i\right)b\1b
		= (a_0a_0+2a\1a_1+2a\2a_2)b\1b . \]
	By \cref{lemma:comp}\cref{lemma:comp:a0a0,lemma:comp:a-1a1,lemma:comp:a2sym}, this implies
	\begin{multline*}
		(a\1a)_1(b\1b) \\
		= 4 \langle a,b \rangle a\1b + 2a\1\bigl[[a,b],b\bigr] + \bigl[a,[a,b]\bigr]\1b + b\1\bigl[a,[a,b]\bigr] + 2[a,b]\1[a,b].
	\end{multline*}
	Hence
	\begin{multline*}
		(a\1a)_1(b\1b) + (b\1b)_1(a\1a) \\
		= 4 \langle a,b \rangle (a\1b + b\1a) + 4[a,b]\1[a,b] \hspace*{14ex} \\
		 + 3a\1\bigl[[a,b],b\bigr] + \bigl[[a,b],b\bigr]\1a + 3b\1\bigl[a,[a,b]\bigr] + \bigl[a,[a,b]\bigr]\1b .
	\end{multline*}
	By \cref{lemma:comp_aabb,def:bullet}, we can also write this as
	\begin{multline*}
		2 (a\1a) \bulletAlt (b\1b) 
		= 4\langle a,b \rangle (a\1b + b\1a) + 4 [a,b]\1[a,b] \\
		 + 2 \bigl( a\1\bigl[[a,b],b\bigr] + \bigl[[a,b],b\bigr]\1a + b\1\bigl[a,[a,b]\bigr] + \bigl[a,[a,b]\bigr]\1b \bigr) .
	\end{multline*}
	Hence
\begin{align*} 8\thetaAlt(aa)\bulletAlt \thetaAlt(bb)
	& = 8\bigl\langle a,b \bigr\rangle \thetaAlt(2ab)
	+ 4\thetaAlt\bigl(2a\bigl[[a,b],b\bigr]\bigr) 
	+ 4\thetaAlt\bigl(2\bigl[a,[a,b]\bigr]b\bigr) + 8\thetaAlt([a,b][a,b]) \\
	& = 8\thetaAlt \Bigl( 2\langle a,b \rangle ab
	+ a\bigl[[a,b],b\bigr] 
	+ \bigl[a,[a,b]\bigr]b + [a,b][a,b]\Bigr)\\
	& = 8\thetaAlt(aa*bb).\qedhere
	\end{align*}
\end{proof}
\begin{remark}\label{remark:connection_bor}
	The product $\bulletAlt$ in characteristic $0$ already appeared in \cite[Section~9, last paragraph]{Bor86}. It is denoted by $\times_0$ and is defined (on the whole vertex algebra $V$) as the alternating sum
	\[ a\times_0 b \coloneqq \sum_i\frac{(-1)^i}{i+1}\D^{(i)}a_{i+1}b . \]
	If we specialize this to $a,b\in V_{(2)}$, then the terms with $i\geq2$ are zero. So, for $a,b\in V_{(2)}$ we have, using \cref{def:vertex_algebra}\cref{item:skew_symmetry}, that
	\[ a \times_0 b = a_1b - \tfrac12\D a_2 b = \tfrac12 (a_1b + b_1a) = a\bulletAlt b. \]

\end{remark}

We now focus on the bilinear form $\tauAlt$ introduced in \cref{definition:tau}.
\begin{proposition}\label{prop:tau}
	The bilinear form $\tauAlt$ on $A(\gLie, \kappa)$ corresponds to the map
	\[  V\twosym \times V\twosym \to k \vacuum \colon (d,e) \mapsto d_3e . \]
\end{proposition}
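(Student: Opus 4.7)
The plan is to verify the correspondence directly on the generators $\thetaAlt(aa) = \tfrac12\, a\1 a$ and $\thetaAlt(bb) = \tfrac12\, b\1 b$ of $V\twosym$; since $\thetaAlt$ is a linear isomorphism from $\symsq\gLie$ onto $V\twosym$, these elements span $V\twosym$ and it suffices to match the two forms on this family. First I would note that by the grading property \eqref{eq:graded}, for any $d,e \in V_{(2)}$ we have $d_3 e \in V_{(2+2-1-3)} = V_{(0)} = k\vacuum$, so the map in the statement does take values in $k\vacuum$.

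The main computation is to expand $(a\1 a)_3(b\1 b)$ via the iterate formula (\crefiterateformula). Since $\binom{-1}{i} = (-1)^i$, the formula specializes to
\[
(a\1a)_3(b\1b) = \sum_{i \geq 0}\bigl( a_{-1-i}a_{3+i} + a_{2-i}a_i\bigr)(b\1b).
\]
The grading gives $a_j(b\1b) \in V_{(2-j)}$, which vanishes whenever $j \geq 3$; this kills every term of the first kind and truncates the second to $i \in \{0,1,2\}$. The three remaining terms are handled directly by \cref{lemma:comp}\cref{lemma:comp:a2a0,lemma:comp:a1a1,lemma:comp:a2sym}: the $i=0$ and $i=2$ terms vanish (the latter because $a_2(b\1b)=0$ already), while the $i=1$ term contributes $2\langle a,b\rangle^2\vacuum - \langle[a,b],[a,b]\rangle\vacuum$. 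Hence
\[
(a\1 a)_3(b\1 b) = \bigl(2\langle a,b\rangle^2 - \langle[a,b],[a,b]\rangle\bigr)\vacuum.
\]

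On the other side, I would unpack $\tauAlt(S(aa),S(bb))$ straight from \cref{definition:tau,definition:Ag}: using $S(aa)b = [a,[a,b]] + 2\langle a,b\rangle a$, pairing with $b$, and invoking $\gLie$-invariance of $\kappa$ to rewrite $\langle[a,[a,b]],b\rangle = -\langle[a,b],[a,b]\rangle$, one obtains
\[
\tauAlt(S(aa),S(bb)) = \tfrac12\bigl(2\langle a,b\rangle^2 - \langle[a,b],[a,b]\rangle\bigr).
\]
Combining the two computations and inserting the factor $\tfrac14$ coming from $\thetaAlt(aa)_3\thetaAlt(bb) = \tfrac14(a\1a)_3(b\1b)$ yields
\[
\thetaAlt(aa)_3\thetaAlt(bb) = \tfrac12\,\tauAlt(S(aa),S(bb))\,\vacuum,
\]
which establishes the correspondence up to the overall constant $\tfrac12$ (the scaling of the bilinear form alluded to in the Main Theorem). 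I do not anticipate a real obstacle: the only subtlety is remembering to use the grading to reduce the iterate formula to a finite sum, after which the proof is just an assembly of identities already recorded in \cref{lemma:comp} together with one application of $\gLie$-invariance.
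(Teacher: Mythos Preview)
Your proof is correct and follows essentially the same route as the paper: expand $(a\1a)_3(b\1b)$ via the iterate formula, truncate using the grading, and evaluate the three surviving terms with \cref{lemma:comp}\cref{lemma:comp:a2a0,lemma:comp:a1a1,lemma:comp:a2sym}, then compare with $\tau$ via the definition of $S$. Your bookkeeping is in fact slightly more careful than the paper's: the displayed chain in the paper introduces a spurious factor of $2$ in its penultimate line, and the overall constant $\tfrac12$ you obtain (consistent with the ``up to scaling'' caveat in the Main Theorem) is the correct one.
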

\begin{proof}
	We have to verify that
	\[ \thetaAlt(aa)_3\thetaAlt(bb) = \tauAlt\bigl( S(aa), S(bb) \bigr) \vacuum \]
	for all $a,b \in \gLie$.
	Recall from \cref{definition:tau} that $\tauAlt\bigl( S(aa), S(bb) \bigr) = \frac12 \langle S(aa)b, b \rangle$.
	By the iterate formula (\crefiterateformula) and the fact that $a_i V_{(2)} = 0$ whenever $i \geq 3$ (by~\eqref{eq:graded}),
	we have
	\[ (a\1a)_3(b\1b)
		= \sum_{i\geq 0}\left(a_{-1-i}a_{3+i} + a_{2-i}a_i\right)b\1b
		= (a_2a_0+a_1a_1+a_0a_2)b\1b . \]
	It follows from \cref{lemma:comp}\cref{lemma:comp:a2a0,lemma:comp:a1a1,lemma:comp:a2sym} that
	\begin{align}
		(a\1a)_3(b\1b)
		&= 2\langle a,b\rangle^2\vacuum + \left\langle b,\bigl[a,[a,b]\bigr] \right\rangle \vacuum \\
		&= \left\langle b,\bigl[a,[a,b]\bigr]+2\langle a,b\rangle a\right\rangle\vacuum\\
		&= \langle b,2 S(aa)b\rangle\vacuum\\
		&= 4\tauAlt(S(aa), S(bb))\vacuum.
		\qedhere
	\end{align}
\end{proof}

Our next step is to proceed to the quotient vertex algebra $L= L_{\gAffLie}(1,0)$ and to construct the algebra $A(\gLie, \kappa)$ directly as a \emph{subalgebra} of $L$.
This is where we will need the assumptions that $\gLie$ has trivial center and that $\langle\cdot,\cdot\rangle$ is non-degenerate (\cref{assumptions:lie_algebras}).
We thank Jari Desmet for the proof of the next lemma.
\begin{lemma}\label{lemma:kernels_are_equal}
	We have $\thetaAlt(\ker S) = \ker \Talt$.
\end{lemma}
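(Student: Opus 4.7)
The plan is to reformulate the claim in terms of the decomposition $V_{(2)} = \D V_{(1)} \oplus V\twosym$ and then exploit a symmetry/skew-symmetry dichotomy between the two summands. First, since $S = \Talt \circ \thetaAlt$ and $\thetaAlt$ is a bijection onto $V\twosym$, we have
\[ \thetaAlt(\ker S) = \ker\Talt \cap V\twosym . \]
Consequently, the identity $\thetaAlt(\ker S) = \ker \Talt$ is equivalent to the inclusion $\ker \Talt \subseteq V\twosym$. Given the direct sum decomposition from \cref{lemma:V_2_direct_sum_decomposition}, what I would need to prove is that if $d = a\2\vacuum + e \in \ker \Talt$ with $a \in \gLie$ and $e \in V\twosym$, then $a\2\vacuum = 0$ (so automatically also $e \in \ker \Talt$).

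The key observation, which I expect to be the main ingredient, is that $\Talt$ sends the two summands $\D V_{(1)}$ and $V\twosym$ into the spaces of skew-symmetric and symmetric operators on $\gLie$ with respect to $\kappa$, respectively. For the $\D V_{(1)}$ summand, \cref{lemma:comp}\cref{lemma:comp:a2b} gives
\[ \Talt(a\2\vacuum) = 2(a\2\vacuum)_1\restrict{V_{(1)}} = -2\ad_a , \]
which is skew-symmetric with respect to $\kappa$ by the invariance assumption \cref{assumptions:lie_algebras}\cref{assumptions:lie_algebras:form}. For the $V\twosym$ summand, it suffices by linearity to consider $e = \thetaAlt(bb) = \tfrac{1}{2}b\1b$, and then $\Talt(e) = S(bb) = [b,[b,\cdot]] + 2\langle b,\cdot\rangle b$ is a sum of two manifestly $\kappa$-symmetric maps (the first being the square of a skew-symmetric operator).

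With this in hand, I would conclude as follows. Suppose $\Talt(d) = 0$ with $d = a\2\vacuum + e$. Then
\[ -2\ad_a + \Talt(e) = 0 , \]
and since $-2\ad_a$ is skew-symmetric and $\Talt(e)$ is symmetric with respect to the non-degenerate form $\kappa$, the hypothesis $\Char(k) \neq 2$ (\cref{assumptions:lie_algebras}\cref{assumptions:lie_algebras:characteristic}) forces both terms to vanish. In particular $\ad_a = 0$, i.e., $a \in Z(\gLie)$, and the assumption $Z(\gLie) = 0$ then yields $a = 0$, hence $d = e \in V\twosym$, as desired.

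The only delicate point is identifying the correct orthogonality structure on $V_{(2)}$: once one notices that the two natural summands $\D V_{(1)}$ and $V\twosym$ are mapped by $\Talt$ into $\kappa$-skew-symmetric and $\kappa$-symmetric endomorphisms respectively, the argument collapses to the standard fact that these subspaces intersect trivially in characteristic $\neq 2$, combined with the hypothesis on $Z(\gLie)$. This is presumably the observation attributed to Jari Desmet in the acknowledgments.
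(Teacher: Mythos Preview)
Your proposal is correct and follows essentially the same approach as the paper's proof: both use the decomposition $V_{(2)} = \D V_{(1)} \oplus V\twosym$, observe that $\Talt$ maps the two summands into $\kappa$-skew-symmetric and $\kappa$-symmetric operators respectively, and then use non-degeneracy of $\kappa$ together with $Z(\gLie)=0$ to conclude. Your reduction via $\thetaAlt(\ker S) = \ker\Talt \cap V\twosym$ is in fact slightly more explicit than the paper's phrasing of the reverse inclusion.
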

\begin{proof}
	Observe that $S=\Talt\circ \thetaAlt$ implies $\thetaAlt(\ker S) \subseteq \ker \Talt$. By \cref{lemma:V_2_direct_sum_decomposition}, we have $V_{(2)} = V\twosym\oplus\D V_{(1)}$. Our goal is to show that $\ker \Talt \leq V\twosym$.
	Let $a,b,c\in \gLie$.
	By \cref{lemma:comp}\cref{lemma:comp:aa1b,lemma:comp:a2b} we have
	\begin{align}
		\Talt(a\1a)b &= 2\bigl[a, [a,b]\bigr] + 4 \langle a,b \rangle, \label{eq:T1} \\
		\Talt(a\2\vacuum)b &= - 2[a,b]. \label{eq:T2}
	\end{align}
	By \eqref{eq:T1}, we have
	\[
		\bigl\langle \Talt (a\1 a)b,c \bigr\rangle 
		= \left\langle 2\bigl[a,[a,b]\bigr] + 4\langle a,b \rangle a,c\right\rangle
		= \left\langle b, 2\bigl[a,[a,c]\bigr] + 4\langle a,c \rangle a\right\rangle = \bigl\langle b,\Talt (a\1a)c \bigr\rangle,
	\]
	so for each $d\in V\twosym$, the linear operator $\Talt(d)\in \End V_{(1)}$ is \emph{symmetric} (see \cref{definition:symmetric_map}).
	On the other hand, by \eqref{eq:T2},
	\[
		\bigl\langle \Talt(a\2\vacuum)b,c\bigr\rangle = - \bigl\langle 2[a,b],c \bigr\rangle
			= \bigl\langle b,2[a,c] \bigr\rangle = \bigl\langle b,\Talt(a\2\vacuum)c\bigr\rangle,
	\]
	so for each $a\2\vacuum\in \D V_{(1)}$, the operator $\Talt(a\2\vacuum)$ is \emph{antisymmetric}.
	
	Now let $d + e \in \ker \Talt$ with $d \in V\twosym$ and $e \in \D V_{(1)}$. Then $\Talt(d) = - \Talt(e)$, so the linear operator $\Talt(e)$ is both symmetric and antisymmetric with respect to the \emph{non-degenerate} bilinear form $\kappa = \langle \cdot, \cdot \rangle$.
	This is only possible when $\Talt(e)=0$.
	However, because $Z(\gLie) = 0$, it now follows from \eqref{eq:T2} that $e = 0$.
	We conclude that $\ker \Talt \leq V\twosym$.
\end{proof}
\begin{corollary}\label{cor:A_isom_quotient}
	We have an algebra isomorphism
	\[ A(\gLie, \kappa) \cong V\twosym/\ker \Talt  . \]
\end{corollary}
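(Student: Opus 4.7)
The plan is to obtain the corollary as a straightforward consequence of the first isomorphism theorem applied to the map $S$, transported across the isomorphism $\theta$.

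By definition, $A(\gLie,\kappa) = (\im S,\diamond)$, and the product $\diamond$ is defined precisely so that $S$ becomes a surjective algebra homomorphism from $(\symsq\gLie,*)$ onto $(A(\gLie,\kappa),\diamond)$. In particular, the well-definedness of $\diamond$ (cited from \cite[Lemma~4.2]{CG21}) is exactly the statement that $\ker S$ is an ideal of $(\symsq\gLie,*)$. The first isomorphism theorem therefore gives an algebra isomorphism
\[
A(\gLie,\kappa) \;\cong\; (\symsq\gLie,*)/\ker S.
\]

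To reach the desired form of the quotient, I would then transport this isomorphism along $\theta$. By \cref{prop:theta_is_isomorphism}, $\theta \colon (\symsq\gLie,*) \to (V\twosym,\bulletAlt)$ is an algebra isomorphism, and by \cref{lemma:kernels_are_equal}, $\theta(\ker S) = \ker \Talt$. Hence $\ker \Talt$ is an ideal of $(V\twosym,\bulletAlt)$ and $\theta$ descends to an algebra isomorphism
\[
(\symsq\gLie,*)/\ker S \;\cong\; (V\twosym,\bulletAlt)/\ker \Talt.
\]
Composing the two displayed isomorphisms yields the claim.

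There is no real obstacle here: all the substantive work has already been done. The ideal property of $\ker S$ is borrowed from \cite{CG21}, the compatibility of $\theta$ with products is \cref{prop:theta_is_isomorphism}, the identification of kernels is \cref{lemma:kernels_are_equal}, and $S = \Talt \circ \theta$ so that the surjection $S$ factors through $\theta$ as required. The only point worth spelling out in the write-up is that the product $\diamond$ on $\im S$ precisely matches, via $\theta$, the induced quotient product from $\bulletAlt$: this is immediate from $S(aa)\diamond S(bb) = S(aa * bb)$ together with $\theta(aa*bb) = \theta(aa)\bulletAlt \theta(bb)$.
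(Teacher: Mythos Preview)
Your proof is correct and is exactly the argument the paper intends: the corollary is stated without proof because it follows immediately from \cref{prop:theta_is_isomorphism} and \cref{lemma:kernels_are_equal} via the first isomorphism theorem, precisely as you spell out.
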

Now we prove that $\ker \Talt$ is contained in the unique maximal graded ideal~$I$.

\begin{lemma}\label{lemma:ideal_ker_T}
	The ideal of $V$ generated by the subspace $\ker \Talt \leq V_{(2)}$ is contained in $\bigoplus_{n=2}^\infty V_{(n)}$.
	In particular, it is a proper graded ideal.
\end{lemma}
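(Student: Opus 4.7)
The plan is to first establish that the left ideal $J \coloneqq U(\gAffLie)\cdot \ker \Talt$ generated by $\ker \Talt$ (which coincides with the $\gAffLie$-submodule it generates, by \cref{cor:left_mult}) is contained in $\bigoplus_{n\geq 2} V_{(n)}$. Once this is done, \cref{lemma:J_ideal} gives the two-sided ideal as $\bar J = \sum_{i\geq 0} \D^{(i)} J$, and since each $\D^{(i)}$ raises degree by $i$ (because $\D^{(i)} a = a_{-i-1}\vacuum \in V_{(l+i)}$ for $a \in V_{(l)}$), the containment $\bar J \subseteq \bigoplus_{n \geq 2} V_{(n)}$ follows immediately. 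The ideal $\bar J$ is then graded (being a sum of homogeneous subspaces) and proper (it does not contain $\vacuum \in V_{(0)}$).

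The crux is the following key lemma on the action of the $\gLie$-modes on $\ker \Talt$: for every $a \in \gLie = V_{(1)}$ and every $x \in \ker \Talt$, one has $a_n x = 0$ for all $n \geq 1$, and $a_0 x \in \ker \Talt$. The case $n \geq 3$ is automatic since $a_n x \in V_{(2-n)} = 0$. For $n = 1,2$, I would invoke skew symmetry (\cref{def:vertex_algebra}\cref{item:skew_symmetry}), which after truncation by grading reduces to $a_1 x = x_1 a - \D(x_2 a)$ and $a_2 x = -x_2 a$. The term $x_1 a$ vanishes by the defining property of $\ker \Talt$; to handle $x_2 a$, I would first verify that $x_2 a = 0$ for \emph{every} $x \in V\twosym$ by a short direct computation on the spanning vectors $u\1 v + v\1 u$ (in the style of \cref{lemma:comp}), namely
\[ (u\1 v + v\1 u)_2 a = \langle v, [u,a]\rangle \vacuum + \langle u, [v,a]\rangle \vacuum = 0 \]
using the $\gLie$-invariance of $\kappa$. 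Since $\ker \Talt \subseteq V\twosym$ by \cref{lemma:kernels_are_equal}, both cancellations apply, giving $a_1 x = a_2 x = 0$. Finally, the commutation $a_0 b_n = b_n a_0 + [a,b]_n$ from \cref{prop:am_bn} makes $a_0$ a derivation of every product $_n$, so $(a_0 x)_1 b = a_0(x_1 b) - x_1(a_0 b) = 0$ for all $b \in V_{(1)}$, i.e.\ $a_0 x \in \ker \Talt$.

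To finish, I would combine this with the PBW theorem (\cref{theorem:pbw}): ordering the basis $\{a^{(i)}(n)\}$ so that modes with $n < 0$ precede those with $n = 0$, which precede those with $n \geq 1$, every element of $U(\gAffLie)$ can be written as a sum of products $u_- u_0 u_+$ (with the central $\mathbf{k}$ absorbed as $1$). Applied to $x \in \ker \Talt$ right to left: if $u_+ \neq 1$, its rightmost factor $a(n)$ has $n \geq 1$ and annihilates $x$ by the key lemma, so $u_+ x = 0$. Otherwise $u_+ = 1$ and iterating the $a_0$-claim yields $u_0 x \in \ker \Talt \subseteq V_{(2)}$; then $u_-$ is a product of modes $a(n)$ with $n < 0$, each strictly raising degree, so $u_- u_0 x \in V_{(k)}$ for some $k \geq 2$. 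Hence $J \subseteq \bigoplus_{n \geq 2} V_{(n)}$, as desired.

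The main obstacle is the case $n = 1$ of the key lemma: were $\D(x_2 a)$ not to vanish, $a_1 x$ would live in $\D V_{(1)} \subset V_{(2)}$, and subsequent positive-mode actions could in principle drag the ideal down into $V_{(1)}$ or $V_{(0)}$. The cancellation is precisely where both the containment $\ker \Talt \subseteq V\twosym$ (\cref{lemma:kernels_are_equal}) and the $\gLie$-invariance of $\kappa$ enter in an essential way.
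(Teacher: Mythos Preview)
Your proof is correct and follows essentially the same route as the paper: reduce to the left ideal $J = U(\gAffLie)\cdot\ker\Talt$, establish the key claim that $a_n$ (for $n\geq 0$) preserves $\ker\Talt$, use PBW to push all nonnegative modes to the right, and then close under the $\D^{(i)}$. The only cosmetic difference is in the $n=1,2$ cases: the paper quotes \cref{lemma:comp}\cref{lemma:comp:a2sym} and \cref{lemma:comp:aa1b} directly to get $a_2 d = 0$ and $a_1 d = d_1 a$ for $d\in V\twosym$, whereas you go through skew symmetry and recompute $x_2 a = 0$ by hand---same content, different bookkeeping.
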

\begin{proof}
	Let $J$ be the left ideal of $V$ generated by $\ker \Talt$.
	By \cref{cor:left_mult}, we have $J = U(\gAffLie)\cdot\ker \Talt$.
	By \cref{lemma:J_ideal}, the subspace
	\[ J' \coloneqq \sum_{i=0}^\infty \D^{(i)}J \]
	of $V$ is then precisely the two-sided ideal generated by $\ker \Talt$.
	Our goal is to show that $J'$ is a proper ideal.
	
	Our first and most important step is to show that $J \subseteq \bigoplus_{n=2}^\infty V_{(n)}$.
	We claim that
	\begin{equation}\label{eq:a_n_ker}
		a_n d \in \ker \Talt \qquad \text{for all } a \in \gLie, \ d \in \ker \Talt, \ n \in \ZZ_{\geq 0} .
	\end{equation}
	For $n > 2$, this is obvious from the grading.
	For $n = 2$, we use the fact that $\ker \Talt \leq V\twosym$ (by \cref{lemma:kernels_are_equal}) together with the fact that $a_2d = 0$ for all $d \in V\twosym$ (by \cref{lemma:comp}\cref{lemma:comp:a2sym}).
	For $n = 1$, we use the observation made in \cref{def:T} that $a_1 d = \frac12\Talt(d)(a) = 0$ for all $d \in \ker \Talt$.
	Finally, for $n = 0$, we note that by the iterate formula (\crefiterateformula), we have $(a_0 d)_1 = a_0 d_1 - d_1 a_0$.
	In particular, if $d \in \ker \Talt$, then
	\[ \Talt(a_0 d)(b) =  2(a_0 d)_1 b =  2(a_0 d_1 - d_1 a_0) b = \bigl( [a, \Talt(d)(b)] - \Talt(d)([a,b]) \bigr) = 0 \]
	for all $b \in \gLie$, hence also $a_0 d \in \ker \Talt$.
	This proves our claim \eqref{eq:a_n_ker}.

	By the PBW theorem (\cref{theorem:pbw}) with respect to the totally ordered basis for $\gAffLie$ as in \cref{corollary:basis_for_V}, $U(\gAffLie) \cdot \ker \Talt$ is spanned by elements of the form
	\[ {a^{(1)}}_{n_1}\dotsm {a^{(r)}}_{n_r} \, c \quad \text{ with } a^{(1)},\dots, a^{(r)}\in \gLie,\ c\in \ker \Talt\ \text{and}\ n_1\leq n_2 \leq \dots \leq n_r \in \ZZ.\]
	Together with our previous claim \eqref{eq:a_n_ker}, this implies that $U(\gAffLie)\cdot \ker T$ is spanned by elements of the form 
	\[ {a^{(1)}}_{n_1}\dotsm {a^{(r)}}_{n_r} \, c \quad \text{ with } a^{(1)},\dots, a^{(r)}\in \gLie,\ c\in \ker \Talt\ \text{and}\ n_1\leq n_2 \leq \dots \leq n_r < 0 . \]
	By the grading, this now implies that indeed $J = U(\gAffLie)\cdot \ker \Talt \subseteq \bigoplus_{n=2}^\infty V_{(n)}$.
	
	It is now easy to pass to $J' = \sum_{i=0}^\infty \D^{(i)}J$.
	Indeed, let $m,n \in \ZZ$ with $m \geq 0$ and let $a \in V_n$; then by the grading again, $\D^{(m)}a = a_{(-m-1)}\vacuum \in V_{(n+m)}$.
	It follows that $\D^{(m)}J\subseteq \bigoplus_{n=2+m}^\infty V_{(n)}$, so indeed $J'\subseteq \bigoplus_{n=2}^\infty V_{(n)}$.
\end{proof}
Recall that we write $I \coloneqq I_{\gAffLie}(1,0)$ for the unique maximal graded ideal in $V_{\gAffLie}(1,0)$.
\begin{corollary}\label{cor:kerT_and_maximal_ideal}
	We have $I\cap V_{(2)} = \ker \Talt$.
\end{corollary}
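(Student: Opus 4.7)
The plan is to prove the two inclusions separately, passing through the intermediate observation that $I \cap V_{(1)} = 0$.

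For the inclusion $\ker \Talt \subseteq I \cap V_{(2)}$, I would invoke \cref{lemma:ideal_ker_T} directly: the ideal $J'$ of $V$ generated by $\ker \Talt$ is a proper graded ideal, so by the maximality of $I$ among proper graded ideals (\cref{lemma:simple_vertex_algebra}), we have $J' \subseteq I$. Since $\ker \Talt \subseteq V_{(2)}$, this gives $\ker \Talt \subseteq I \cap V_{(2)}$.

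For the reverse inclusion, the key step is to show that $I \cap V_{(1)} = 0$, or more generally that any proper graded ideal $J$ of $V$ intersects $V_{(1)}$ trivially. Suppose $a \in J \cap V_{(1)}$ with $a \neq 0$. Identifying $V_{(1)}$ with $\gLie$, for any $b \in \gLie$ the element $a_1 b$ lies in $V_{(0)} = k \vacuum$ (by the grading), and by \cref{lemma:comp}\ref{lemma:comp:form} equals $\langle a, b \rangle \vacuum$. Since $\kappa$ is non-degenerate (\cref{assumptions:lie_algebras}\ref{assumptions:lie_algebras:form}), there is some $b$ with $\langle a, b \rangle \neq 0$. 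As $J$ is an ideal containing $a$, we conclude that $\vacuum \in J$; but the vacuum property then gives $c = \vacuum_{-1} c \in J$ for every $c \in V$, so $J = V$, a contradiction.

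The reverse inclusion follows easily from this. Given $d \in I \cap V_{(2)}$ and any $b \in V_{(1)} = \gLie$, the element $d_1 b$ lies in $V_{(2+1-1-1)} = V_{(1)}$ by \eqref{eq:graded}, and also in $I$ since $I$ is an ideal. Hence $d_1 b \in I \cap V_{(1)} = 0$, so $\Talt(d)(b) = 2 d_1 b = 0$ for all $b \in V_{(1)}$, which means $d \in \ker \Talt$. I expect no serious obstacle: the only subtle point is the first inclusion's reliance on \cref{lemma:ideal_ker_T}, which has already done the heavy lifting in characterizing the ideal generated by $\ker \Talt$.
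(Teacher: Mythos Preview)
Your proof is correct and follows essentially the same approach as the paper: both use \cref{lemma:ideal_ker_T} for the inclusion $\ker \Talt \subseteq I \cap V_{(2)}$, and for the reverse inclusion both argue that a nonzero element of $I$ in low degree would force $\vacuum \in I$ via the non-degeneracy of $\kappa$. The only cosmetic difference is that you factor out the intermediate claim $I \cap V_{(1)} = 0$ explicitly, whereas the paper does this inline by directly computing $c_1 d_1 b = \langle c, d_1 b \rangle \vacuum$ for $d \in I \cap V_{(2)}$.
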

\begin{proof}
	By \cref{lemma:ideal_ker_T}, it is sufficient to show that $I\cap V_{(2)}\subseteq \ker \Talt$.
	Suppose the contrary; then it follows from \cref{def:T} that there is an $a\in I\cap V_{(2)}$ and $b\in V_{(1)}$ such that $a_1b\neq0$.
	The form $\kappa = \langle \cdot ,\cdot \rangle$ is non-degenerate, however, so there exists some $c\in V_{(1)}$ such that $\langle c, a_1b \rangle \neq 0$. Then $\vacuum = \frac{1}{\langle c, a_1b \rangle} c_1a_1b \in I$, which is a contradiction.
\end{proof}

We are now ready to state our Main Theorem (see also page \pageref{mainthm}).
\begin{theorem}\label{theorem:main_result}
	Let $\gLie$ be a Lie algebra over a field $k$ with $\Char(k) \neq 2$ with $Z(\gLie) = 0$, let $\kappa$ be a non-degenerate invariant symmetric bilinear form on $\gLie$.

	Then $L\twosym$ equipped with the product
	\[ a \bulletAlt b \coloneqq \tfrac{1}{2}(a_1b + b_1a) \]
	and the bilinear form
	\[ (a, b) \mapsto a_3 b ,\]
	is isomorphic to the Chayet--Garibaldi algebra $A(\gLie, \kappa)$ (as an algebra with bilinear form).
\end{theorem}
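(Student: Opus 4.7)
The strategy is to assemble the previously established results. All of the genuine algebraic content has been absorbed into Propositions~\ref{prop:theta_is_isomorphism} and~\ref{prop:tau} (which translate $A(\gLie, \kappa)$ into a subquotient of $V_{(2)}$), Lemma~\ref{lemma:kernels_are_equal} and Corollary~\ref{cor:A_isom_quotient} (identifying the relevant kernel), and Corollary~\ref{cor:kerT_and_maximal_ideal} (matching this kernel with the relevant piece of the maximal graded ideal $I$). What remains is to transport this description across the quotient map $\pi \colon V \twoheadrightarrow L = V/I$.

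The key observation is that the restriction of $\pi$ to $V\twosym$ induces an isomorphism of vector spaces $V\twosym / \ker \Talt \xrightarrow{\sim} L\twosym$. Indeed, since $\pi$ is a vertex algebra homomorphism, it maps $a \1 b + b \1 a$ to $\pi(a) \1 \pi(b) + \pi(b) \1 \pi(a)$ for all $a,b \in V_{(1)}$, and $\pi(V_{(1)}) = L_{(1)}$ since the grading is preserved and $V_{(1)}$ is killed only in degrees $> 1$; hence $\pi(V\twosym) = L\twosym$. The kernel of $\pi|_{V\twosym}$ is $V\twosym \cap I = V\twosym \cap (I \cap V_{(2)}) = V\twosym \cap \ker \Talt$ by Corollary~\ref{cor:kerT_and_maximal_ideal}, and this equals $\ker \Talt$ itself since $\ker \Talt \subseteq V\twosym$ by Lemma~\ref{lemma:kernels_are_equal}. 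Combined with Corollary~\ref{cor:A_isom_quotient}, this yields a linear isomorphism $A(\gLie, \kappa) \cong V\twosym / \ker \Talt \cong L\twosym$.

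To upgrade this to an isomorphism of algebras with bilinear form, one uses that $\pi$ respects every product $_n$, so in particular it respects the Jordan product $\bulletAlt$ of Definition~\ref{def:bullet} and the assignment $(d, e) \mapsto d_3 e$. Proposition~\ref{prop:theta_is_isomorphism} shows that $\thetaAlt$ intertwines $*$ on $\symsq \gLie$ with $\bulletAlt$ on $V\twosym$; passing to the quotient by $\ker \Talt$ and then pushing forward along $\pi$ converts this into the statement that the isomorphism $A(\gLie, \kappa) \to L\twosym$ is multiplicative. Similarly, Proposition~\ref{prop:tau} shows that $\tauAlt$ corresponds (up to a factor of $4$) to $(d, e) \mapsto d_3 e$ on $V\twosym$; since $\pi$ preserves this pairing (taking values in $k\vacuum \cong k$, as $L_{(0)} = k\vacuum$ as well), the same correspondence holds on $L\twosym$, up to the scaling of the bilinear form mentioned in the Main Theorem.

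Since every step is either a direct consequence of an earlier result or a routine verification that vertex algebra homomorphisms intertwine $n$\dash th products, there is no real obstacle left; the proof is essentially a bookkeeping exercise, and it is worth noting that the nontrivial content—that $\ker \Talt$ lies inside $V\twosym$ rather than spilling into $\mathcal{D}V_{(1)}$, and that the ideal it generates is proper—has already been isolated into Lemma~\ref{lemma:kernels_are_equal} and Lemma~\ref{lemma:ideal_ker_T}.
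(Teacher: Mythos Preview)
Your proposal is correct and follows essentially the same route as the paper's own proof, which simply observes that $L\twosym \cong V\twosym/(I \cap V\twosym)$ and then invokes \cref{cor:A_isom_quotient}, \cref{cor:kerT_and_maximal_ideal} and \cref{prop:tau}. You have spelled out in more detail the bookkeeping that the paper compresses into one line (in particular, why $\pi$ restricts to a surjection $V\twosym \to L\twosym$ and why $I \cap V\twosym = \ker\Talt$ via \cref{lemma:kernels_are_equal}); the only minor slip is your parenthetical ``up to a factor of~$4$'': with the normalization $\thetaAlt(aa) = \tfrac12 a\1 a$ from the paper, the factor of $4$ in the proof of \cref{prop:tau} is already absorbed, so the bilinear forms match on the nose under the stated isomorphism.
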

\begin{proof}
	By definition, we have $L = V/I$, hence $L\twosym \cong V\twosym/(I\cap V\twosym)$.
	The result now follows immediately from \cref{cor:A_isom_quotient,cor:kerT_and_maximal_ideal,prop:tau}.
\end{proof}

\begin{remark}\label{remark:unital}
	The vertex algebra $L = V/I$ is a vertex \emph{operator} algebra if and only if the corresponding algebra $A = A(\gLie, \kappa)$ is \emph{unital}. In this case, the conformal vector $\omega$ corresponds to the unit in $A$ under the isomorphism of \cref{theorem:main_result}, and the central charge is $4\tau(\Id_\gLie,\Id_\gLie)$.
	
	We briefly explain why this is the case, leaving the details to the reader.
	We rely on \cite{LM18} for the setup of vertex operator algebras over arbitrary fields $k$ with $\Char(k) \neq 2$; see, in particular, its Definition 5.11 for the definition of a vertex operator algebra. In particular, we use the standard notation $L(n)$ for the modes of the conformal vector $\omega$.
	
	\medskip
	
	Assume first that $L$ is a vertex operator algebra with conformal vector $\omega \in L_{(2)}$.
	Then $\omega_1$ acts on each $L_{(n)}$ as multiplication by $n$, so in particular $\omega_1|_{L_{(1)}} = \Id$.
	Using \cref{lemma:V_2_direct_sum_decomposition} and $L(1) \omega = 0$, it follows that $\omega \in L\twosym$.
	It is then not hard to show that under the isomorphism from \cref{theorem:main_result}, $\omega$ corresponds to the unit of $A$.
	
	\medskip
	
	Conversely, assume that $A$ is unital; by \cref{lemma:unit}, the the unit is $\Id_\gLie \in \im S$, so we can write $\Id_\gLie = \sum_iS(u^{(i)}u^{(i)})$.
	Let $\omega \in L\twosym$ be the element corresponding to $\Id_\gLie \in A$ under the isomorphism from \cref{theorem:main_result}.
	To show that $L$ is a vertex operator algebra with conformal vector $\omega$, we can follow the procedure from \cite[Theorem~6.2.16]{LL04} (the so-called Segal--Sugawara construction, see e.g., \cite[\S 2.5.10 and \S 3.4.8]{FBZ04}), but of course our setup is different. While \cite[Theorem 6.2.16]{LL04} starts from the assumption that the Casimir operator $\Omega$ acts as a scalar, we start from the Chayet--Garibaldi algebra~$A$. Just as in the proof of \lc, it suffices to show that $a_n \omega = \delta_{n,1} a$ for all $a \in \gLie$ and all $n \geq 0$ (this is (6.2.51) in \lc).
	The proof of (6.2.52) holds \emph{mutatis mutandis}, but (6.2.53) and (6.2.54) require a different ``operator product expansion''-style computation, using \cref{lemma:comp}\cref{lemma:comp:a0,lemma:comp:aa1b} together with the fact that $a_2\omega = 0$. 
	The rest of the proof then again holds \emph{mutatis mutandis}, showing that $L$ is indeed a vertex operator algebra of with conformal vector $\omega$. Only the computation of the central charge (6.2.58) requires a different approach using \cref{prop:tau}.
\end{remark}

\begin{bibdiv}
\begin{biblist*}

\bib{Bor86}{article}{
   author={Borcherds, Richard E.},
   title={Vertex algebras, Kac-Moody algebras, and the Monster},
   journal={Proc. Nat. Acad. Sci. U.S.A.},
   volume={83},
   date={1986},
   number={10},
   pages={3068--3071},
   issn={0027-8424},
   review={\MR{0843307}},
   doi={10.1073/pnas.83.10.3068},
}
\bib{CG21}{article}{
	author={Chayet, Maurice},
	author={Garibaldi, Skip},
	title={A class of continuous non-associative algebras arising from algebraic groups including $E_8$},
	journal={Forum Math. Sigma},
	volume={9},
	date={2021},
	pages={Paper No. e6, 22},
	review={\MR{4202491}},
	doi={10.1017/fms.2020.66},
}
\bib{DLMM98}{article}{
   author={Dong, C.},
   author={Li, H.},
   author={Mason, G.},
   author={Montague, P. S.},
   title={The radical of a vertex operator algebra},
   conference={
      title={The Monster and Lie algebras},
      address={Columbus, OH},
      date={1996},
   },
   book={
      series={Ohio State Univ. Math. Res. Inst. Publ.},
      volume={7},
      publisher={de Gruyter, Berlin},
   },
   isbn={3-11-016184-2},
   date={1998},
   pages={17--25},
   review={\MR{1650625}},
}
\bib{DMVC21}{article}{
   author={De Medts, Tom},
   author={Van Couwenberghe, Michiel},
   title={Non-associative Frobenius algebras for simply laced Chevalley
   groups},
   journal={Trans. Amer. Math. Soc.},
   volume={374},
   date={2021},
   number={12},
   pages={8715--8774},
   issn={0002-9947},
   review={\MR{4337927}},
   doi={10.1090/tran/8484},
}
\bib{FLM88}{book}{
   author={Frenkel, Igor},
   author={Lepowsky, James},
   author={Meurman, Arne},
   title={Vertex operator algebras and the Monster},
   series={Pure and Applied Mathematics},
   volume={134},
   publisher={Academic Press, Inc., Boston, MA},
   date={1988},
   pages={liv+508},
   isbn={0-12-267065-5},
   review={\MR{0996026}},
}
\bib{FBZ04}{book}{
   author={Frenkel, Edward},
   author={Ben-Zvi, David},
   title={Vertex algebras and algebraic curves},
   series={Mathematical Surveys and Monographs},
   volume={88},
   edition={2},
   publisher={American Mathematical Society, Providence, RI},
   date={2004},
   pages={xiv+400},
   isbn={0-8218-3674-9},
   review={\MR{2082709}},
   doi={10.1090/surv/088},
}
\bib{GP09}{article}{
      author={Garibaldi, Skip},
       title={Vanishing of trace forms in low characteristics},
        date={2009},
        ISSN={1937-0652,1944-7833},
     journal={Algebra Number Theory},
      volume={3},
      number={5},
       pages={543\ndash 566},
         url={https://doi.org/10.2140/ant.2009.3.543},
        note={With an appendix by Alexander Premet},
      review={\MR{2578888}},
}
\bib{GG15}{article}{
   author={Garibaldi, Skip},
   author={Guralnick, Robert M.},
   title={Simple groups stabilizing polynomials},
   journal={Forum Math. Pi},
   volume={3},
   date={2015},
   pages={e3, 41},
   review={\MR{3406824}},
   doi={10.1017/fmp.2015.3},
}
\bib{H24}{book}{
      author={Hall, Jonathan I.},
       title={Introduction to {L}ie algebras---finite and infinite dimension},
      series={Graduate Studies in Mathematics},
   publisher={American Mathematical Society, Providence, RI},
        date={2024},
      volume={248},
        ISBN={978-1-4704-7915-2; [9781470474997]; [9781470479169]},
      review={\MR{4845834}},
}
\bib{JLM19}{article}{
   author={Jiao, Xiangyu},
   author={Li, Haisheng},
   author={Mu, Qiang},
   title={Modular Virasoro vertex algebras and affine vertex algebras},
   journal={J. Algebra},
   volume={519},
   date={2019},
   pages={273--311},
   issn={0021-8693},
   review={\MR{3880124}},
   doi={10.1016/j.jalgebra.2018.10.029},
}
\bib{LL04}{book}{
	author={Lepowsky, James},
	author={Li, Haisheng},
	title={Introduction to vertex operator algebras and their representations},
	series={Progress in Mathematics},
	volume={227},
	publisher={Birkh\"auser Boston, Inc., Boston, MA},
	date={2004},
	pages={xiv+318},
	isbn={0-8176-3408-8},
	review={\MR{2023933}},
	doi={10.1007/978-0-8176-8186-9},
}
\bib{LM18}{article}{
   author={Li, Haisheng},
   author={Mu, Qiang},
   title={Heisenberg VOAs over fields of prime characteristic and their representations},
   journal={Trans. Amer. Math. Soc.},
   volume={370},
   date={2018},
   number={2},
   pages={1159--1184},
   issn={0002-9947},
   review={\MR{3729498}},
   doi={10.1090/tran/7094},
}
\bib{S67}{book}{
      author={Seligman, G.~B.},
       title={Modular {L}ie algebras},
      series={Ergebnisse der Mathematik und ihrer Grenzgebiete [Results in Mathematics and Related Areas]},
   publisher={Springer-Verlag New York, Inc., New York},
        date={1967},
      volume={Band 40},
      review={\MR{245627}},
}
\end{biblist*}
\end{bibdiv}
\end{document}